\documentclass[a4paper,12pt]{article}

\newenvironment{proof}{\noindent {\bf Proof:}}{\hfill $\Box$}

\newtheorem{theorem}{Theorem}
\newtheorem{lemma}{Lemma}
\newtheorem{proposition}{Proposition}
\newtheorem{corollary}{Corollary}

\newtheorem{definition}{Definition}
\newtheorem{assumption}{Assumption}
\newtheorem{remark}{Remark}

\textheight235mm
\textwidth160mm
\voffset-10mm
\hoffset-10mm
\parindent0cm
\parskip2mm

\usepackage{amsmath}
\usepackage{amssymb}
\usepackage{amsfonts}
\usepackage{graphicx}
\usepackage{mathrsfs}

\newcommand{\R}{\mathbb{R}}
\newcommand{\N}{\mathbb{N}}

\newcommand{\SignedMeasures}{\mathcal{M}}
\newcommand{\PositiveMeasures}{\mathcal{M}^+}

\newcommand{\diff}{\mathrm{d}}

\title{\bf Modal occupation measures  and LMI  
relaxations  for   nonlinear 
switched systems  control  \footnote{This work with partly
supported by project 13-06894S of the Grant Agency of the Czech Republic and by the UK Engineering and Physical Sciences Research Council under Grant EP/G066477/1.}}

\begin{document}

\author{Mathieu Claeys$^1$,  Jamal Daafouz$^2$, Didier Henrion$^{3,4,5}$ }

\footnotetext[1]{Avenue Edmond Cordier 19, 1160 Auderghem, Belgium. {\tt mathieu.claeys@gmail.com}}
\footnotetext[2]{Universit\'e de Lorraine, CRAN, CNRS, IUF, 2 avenue de la for\^et de Haye, 54516 Vand\oe uvre cedex, France. {\tt jamal.daafouz@univ-lorraine.fr}}
\footnotetext[3]{CNRS; LAAS; 7 avenue du colonel Roche, F-31400 Toulouse; France. {\tt henrion@laas.fr}}
\footnotetext[4]{Universit\'e de Toulouse; LAAS; F-31400 Toulouse; France}
\footnotetext[5]{Faculty of Electrical Engineering, Czech Technical University in Prague,
Technick\'a 2, CZ-16626 Prague, Czech Republic}

\date{Draft of \today}

\maketitle

\begin{abstract}
This paper presents a linear programming approach for the optimal control of nonlinear switched systems
 where the control is the switching sequence.  This is done by introducing modal occupation measures, which allow to relax the problem as a primal linear programming (LP) problem. 
Its dual  linear program  of Hamilton-Jacobi-Bellman  inequalities  is also characterized.
The  LPs are then solved numerically with a
converging hierarchy of primal-dual moment-sum-of-squares (SOS) linear matrix inequalities (LMI). 
Because of the special structure of switched systems, we obtain a much more efficient method
than could be achieved by applying  standard moment/SOS LMI hierarchies for general optimal control problems. 
\end{abstract}

\section{Introduction}

A switched system is a particular class of a hybrid system that consists of a set of dynamical subsystems, one of which is active at any instant of time, and a policy for activating and deactivating the subsystems. These dynamical systems are encountered in a wide variety of application domains such as automotive industry, power systems, aircraft and traffic control, and more generally the area of embedded systems. Switched systems have been the concern of much research, and many results are available for stability analysis and control design. They make evident the importance of orchestrating the subsystems through an adequate switching strategy, in order to impose global stability  and/or performance. 
Interested readers may refer to the survey papers \cite{DeCarlo,Liberzonb,Shorten,Lin} and the books \cite{Liberzon,Sun} and the references therein. 

In this context, these systems are generally controlled by intrinsically discontinuous control signals, whose switching rule must be carefully designed to guarantee stability and performance properties. As far as optimality is concerned, several results are available in two main different contexts.

The first category of methods exploits necessary optimality conditions, in the form of Pontryagin's maximum principle (the so-called indirect approaches), or through a large nonlinear discretization of the problem (the so-called direct approaches). The first contributions can be found in \cite{Branicky,Hed99,Piccoli} where the problem has been formulated and partial solutions provided through generalized Hamilton-Jacobi-Bellman (HJB)  equations or inequalities and convex optimization. In \cite{Riedinger,Sussmann,Shaikh2007}, the maximum principle is generalized to the case of general hybrid systems with nonlinear dynamics. The case of switched systems is discussed in \cite{Bengea,Seatzu,Axelsson}. 
For general nonlinear problems and hence for switched systems, only \emph{local optimality} can be guaranteed , even when discretization can be properly controlled \cite{Xu2}. The subject is still largely open and we are far from a complete and numericaly tractable solution to the switched optimal control problem.

The second category collects extensions of the performance indices $H_2$ and $H_\infty$
originally developed for linear time invariant systems without switching, and use the flexibility
of Lyapunov's approach, see for instance \cite{Geromel,Deaecto} and references therein. Even for linear switched systems, the proposed results are based on nonconvex optimization problems (e.g., bilinear matrix inequality conditions) difficult to solve directly. Sufficient  convex  Linear Matrix Inequality (LMI) design
conditions may be obtained,  yielding computed solutions which are \emph{suboptimal},
 but at the price of introducing a conservatism (pessimism)   and a gap to optimality  which is
hard, if not impossible, to evaluate. 
Since the computation of this optimal strategy is a difficult task, a suboptimal solution is of interest  only when it is proved to be consistent, meaning that it imposes to the switched system a performance
not worse than the one produced by each isolated subsystem \cite{Gero}. 

Despite the interest of these existing approaches, the optimal control problem is not globally solved for switched systems, and new strategies are more than welcome, as computationally viable design techniques are missing. 
The present paper aims at complementing this rich literature on switched systems with convex programming techniques from the optimal control literature. Indeed, it is a well-known fact \cite{Vinter1978equivalence,Vinter1993ConvexDuality} that optimal control problems can be relaxed as linear programming (LP) problems over measure spaces. Despite some early numerical results \cite{Rubio1986book}, this approach was merely of theoretic interest, before the advent of tractable  primal-dual moment-sum-of-squares (SOS)
\emph{LMI hierarchies}, also now called Lasserre hierarchies, see \cite{lasserre} for an introduction to the subject. Most notably, \cite{sicon} explores in depth the application of such a numerical method for general polynomial optimal control problems, where the control is bounded. By modeling control and trajectory functions as \emph{occupation measures}, one obtains a convex LP formulation of the optimal control problem. This infinite-dimensional LP can be solved numerically and approximately with a hierarchy of convex finite-dimensional LMIs. 

In this paper, we consider the problem of designing optimal switching rules in the case of polynomial switched dynamical systems. Classically, we formulate the optimal control switching problem as an optimal control problem with artificial controls being functions of time, each valued in  the discrete set  $\{0,1\}$, and we relax it into a control problem with such controls now valued in  the interval  $[0,1]$. In contrast with existing approaches following this relaxation strategy, see, e.g., \cite{Bengea,Riedinger99},
relying on the \emph{local} Pontryagin maximum principle,
our aim is to apply the \emph{global} approach of \cite{sicon}. We show that by specializing the convex objects for switched systems, one obtains a numerical method for which the number of modes driving the system enters \emph{linearly} into the complexity of the problem, instead of exponentially. 
On the one hand, our approach follows the optimal control modeling framework.
On the other hand, it exploits the flexibility and computational efficiency of
the convex LMI framework. In contrast with most of the existing work on LMI methods, we have
a \emph{guarantee of global optimality}, in the sense that we obtain an asympotically converging
(i.e., with vanishing conservatism) hierarchy of lower bounds on the achievable performance.

\subsection*{Organization of the paper}

The paper is organized as follows. \S\ref{sec:measureLP} constructs the generalized moment problem associated to the optimal control of switched systems, and details some conic duality results. Theses results are used in \S\ref{sec:momentRelax} to obtain an efficient numerical method based on  the moment/SOS LMI hierarchy. \S\ref{sec:extract} then outlines a simple method to extract candidate optimal trajectories from moment data. \S\ref{sec:extensions} gathers several useful extensions to the problem, kept separate from the main body of the article so as to ease exposition and notations. Finally, \S\ref{sec:examples}
illustrates the method with several examples. 

\subsection*{Contributions}

The contributions of this paper are as follows. On a theoretical level, a novel way of relaxing optimal control problems of switched systems is proposed, following \cite{Henrion2013switch}. The equivalence of this convex program with existing results in the literature \cite{Vinter1978equivalence,Vinter1993ConvexDuality} is now rigorously proven. In addition, a full characterization of the dual conic problem as a system of  relaxed HJB  inequalities  is given. 

On a practical level, we present an easy to implement numerical method to solve the control problem, based on the 
LMI relaxations  for optimal control \cite{sicon}. The specialized measure LP formulation allows for substantial computational gains over generic formulations, which we now characterize. To our knowledge, this is one of the few results, along with \cite{impulse}  and \cite{ltvimpulse}, exploiting problem structure for more efficient 
LMI relaxations  of control problems.

\subsection*{Preliminaries}

Let $\SignedMeasures(X)$ denote the vector space of finite, signed, Borel measures supported on  an Euclidean subset
$X \subset \R^n$,  equipped with the weak-$*$ topology,  see, e.g.,  \cite{Royden} for background material. 
Let $\PositiveMeasures(X)$ denote the cone of non-negative measures in $\SignedMeasures(X)$. The support of $\mu \in \PositiveMeasures(X)$, the closed
set of all points $x$ such that $\mu(A)> 0$ for every open neighborhood
$A$ of $x$, is denoted by $\mathrm{spt} \, \mu$. For $\mu \in \SignedMeasures(X)$, $\mathrm{spt}\,\mu$ is the union of
the supports of the Jordan decomposition of $\mu$.
For a continuous function $f \in {\mathcal C}(X)$, we denote by $\int_{X} \! f(x) \, \mu( \diff x)$ the integral of $f$ w.r.t. the measure $\mu \in \SignedMeasures(X)$.
When no confusion may arise, we note $\langle f, \mu \rangle = \int f \mu$ for the integral to simplify exposition and to insist on the duality relationship between $ {\mathcal C}(X)$ and $\SignedMeasures(X)$. The Dirac measure supported at $x^*$, denoted by $\delta_{x^*}$, is the measure for which $\langle f, \delta_{x^*} \rangle = f(x^*)$ for all $f \in {\mathcal C}(X)$.  The indicator function of set $A$, denoted by $I_A(x)$,  is equal to one if $x \in A$, and zero otherwise. The space of probability measures on $X$, viz., the subset of $\PositiveMeasures(X)$ with mass $\langle 1,\mu \rangle = 1$, is denoted by $\mathcal{P}(X)$.

For multi-index $\alpha \in \N^n$, $\N$ being the set of natural numbers, and vector $x\in \R^n$, we use the notation $x^\alpha := \prod_{i=1}^n x_i^{\alpha_i}$. 
We denote by $\N^n_m$ the set of vectors $\alpha \in \N^n$ such that $\sum_{i=1}^n \alpha_i \leq m $. 
The moment of multi-index $\alpha \in \N^n$ of measure $\mu \in \PositiveMeasures(X)$ is then defined as the real $y_\alpha = \langle x^\alpha, \mu \rangle$.  

A multi-indexed sequence of reals $(y_\alpha)_{\alpha \in \N^n}$ is said to have a representing measure on $X$ if there exists $\mu \in \PositiveMeasures(X)$ such that $y_\alpha = \langle x^\alpha , \mu \rangle$ for all $\alpha \in \N^n$.

Let $\R[x]$ denote the ring of polynomials in the variables $x \in \R^n$, and let $\deg p$ denote the (total) degree of polynomial $p$.

A subset $X$ of $\R^n$ is basic semi-algebraic if it is defined as the intersection of finitely many polynomial inequalities, 
namely $X = \lbrace x \in \R^n : \; g_i(x) \geq 0, \, i = 1 \ldots n_{X}\rbrace$ with $g_i(x) \in \R[x], \, i = 1 \ldots n_{X}$.

\section{Weak linear program}
\label{sec:measureLP}
\subsection{Problem statement}
Consider the optimal control problem
\begin{equation}\label{ocp}
\begin{array}{ll}
\inf\limits_{\sigma} & \displaystyle\int_0^T l_{\sigma(t)}(t,x(t)) \, \diff t \\
\mathrm{s.t.} & \dot{x}(t) = f_{\sigma(t)}(t,x(t)),\\
& x(0) = x_0,  \, x(T) = x_T,  \\
& (t,x(t)) \in [0,T] \times X,  \\
&  \sigma(t) \in \{1,2,\ldots,m\},\\
\end{array}
\end{equation}
where the infimum is over the sequence $\sigma$. The state $x$ is a vector of $n$ elements and the control consists
of choosing, for almost all times, which of the $m$ possible dynamics drives the system.
Throughout this section, the following assumptions hold:
\begin{assumption}
\label{th:standingAssumptions}
Terminal time $T$, initial state $x_0$ and terminal state $x_T$ are given.
State constraint set $X$ is given and compact. Let $K:=[0,T]\times X$. 
Functions $f_j:K \mapsto \R^n$ (dynamics), and $l_j:K \mapsto \R$, $j=1,\dots,m$
 (Lagrangians) are given, continuous in all their variables, and Lipschitz continuous with respect to the state variable.
\end{assumption}
The integrand in the cost is called a Lagrangian, following the standard optimal control terminology, see e.g., \cite{Clarke2013Functional}.
Note that the above assumptions can be significantly weakened. For instance, mere measurability of
the dynamics w.r.t. time, lower semi-continuity of the Lagrangian or unbounded state spaces when the Lagrangian is coercive are often considered in the calculus of variation literature. However, as \S\ref{sec:momentRelax} explores in depth, the question of the numerical representation of problem data is essential for rigorous \emph{global} optimization. Up to our knowledge, only polynomials can offer such guarantees, and Assumption \ref{th:standingAssumptions} is general enough to cover such an essential case.

Obviously, optimal control problem \eqref{ocp} can be equivalently written as
\begin{equation}\label{discrete}
\begin{array}{ll}
\inf\limits_{u} & \displaystyle\int_0^T \sum_{j=1}^m l_j(t,x(t)) \, u_j(t) \, \diff t \\
\mathrm{s.t.} & \displaystyle \dot{x} = \sum_{j=1}^m f_j(t,x(t)) \, u_j(t)\\
& x(0) = x_0, \, x(T) = x_T,\\
& (t,x(t)) \in K, \, u(t) \in U
\end{array}
\end{equation}
where the infimum is with respect to a time-varying vector $u(t)$
which belongs for all $t \in [0,T]$ to the (nonconvex) discrete set
\begin{equation}\label{eq:U}
U := \{(1,0,\ldots,0), (0,1,\ldots,0), \ldots, (0,0,\ldots,1)\} \subset {\mathbb R}^m.
\end{equation}
The next sections explore the various ways this problem can be lifted/relaxed as a linear program over so-called \emph{occupation measures}. 

\begin{remark}
Section \ref{sec:extensions} gathers several extensions to problem \eqref{ocp} for which the present approach is applicable mutatis mutandis. This section is relegated at the end of the paper for ease of exposition.
\end{remark}

\subsection{Occupation measures}
\label{sec:occupMeas}
Simple examples (see for instance at the end of the paper) show that when measurable functions of time are considered for controls such as in \eqref{discrete}, optimal solutions might not exist. This section summarizes several well-known
concepts to regain compactness in the control. These tools will then be specialized in the next section for switched systems.

\begin{definition}[Young measure]
A Young measure is a  time-parametrized  family of probability measures $\omega(\diff u | t) \in  \mathcal{P}(U)$ defined almost everywhere, such that for all continuous functions\footnote{Considering measures as a subset of distributions, those may be referred to as test functions} $v \in {\mathcal C}(U)$, the function $ t \rightarrow \langle v, \omega \rangle$ is Lebesgue measurable. 
\end{definition}
Young measures are also called generalized controls  or relaxed controls,  see for instance \cite[Part III]{Fattorini}.
 Note that classical controls $u(t)$, functions of time with values in $U$, are a particular case of Young measures
for the choice $\omega(\diff u | t) = \delta_{u(t)}(\diff u)$. 
Following the terminology of  \cite{Vinter1993ConvexDuality}, it is natural to relax \eqref{discrete} into the following ``strong form''\footnote{``Strong'' is opposed here to the ``weak'' problem to be defined shortly after.}:
\begin{equation}\label{convex}
\begin{array}{rcll}
p^*_S & = & \inf\limits_{\omega} & \displaystyle\int_0^T \sum_{j=1}^m l_j(t,x(t))u_j(t) \, \diff t \\
&& \mathrm{s.t.} & \dot{x} =  \displaystyle\left\langle  \sum_{j=1}^m f_j(t,x(t)) u_j, \, \omega(\diff u | t) \right\rangle \\
&&& x(0) = x_0, \, x(T) = x_T, \, (t,x(t)) \in K
\end{array}
\end{equation}
 where the infimum is w.r.t. a Young measure $\omega(\diff  u | t) \in \mathcal{P}(U)$. 
Note that there might be a strict relaxation gap, viz.,  $p^*_S$ is strictly less than the infimum in problem (\ref{ocp}), for ill-posed instances. It is not the purpose of this article to explore such a non-generic case, and we refer to the discussion in, e.g., \cite[App.~C]{roa} and the references therein on such occurrences. There are sufficient conditions guaranteeing the absence of such a gap, referred to as inward-pointing conditions,
see for instance \cite{Frankowska}.

An equivalent way of relaxing \eqref{discrete} is to relax the controls to belong, for all $t \in [0,T]$, to the (convex) simplex
\begin{equation}\label{eq:simplex}
\mathrm{conv}\:U = \{u \in {\mathbb R}^m:\: \sum_{j=1}^m u_j = 1, \; u_j \geq 0, \; j=1,\ldots,m\}.
\end{equation}
In other words, problem (\ref{convex}) on Young measures is equivalent to
replacing $U$ with $\mathrm{conv}\:U$ in problem (\ref{discrete}). 
In \cite{Bengea}, problem (\ref{convex}) is called the embedding of problem (\ref{ocp}),
and it is proved that the set of trajectories
of problem (\ref{ocp}) is dense (w.r.t. the uniform norm in the space of continuous
functions of time) in the set of trajectories
of embedded problem (\ref{convex}). Note however that these authors consider
the more general problem of switching design in the presence
of additional bounded controls in each individual dynamics.
To cope with chattering effects due to the simultaneous presence
of controls and (initial and terminal) state constraints,
they have to introduce a further relaxation of the embedded
control problem. In this paper, we do not have controls in
the dynamics, and the only design parameter is the switching
sequence.

Notice that once a Young measure $\omega(\diff u | t)$ is given in problem (\ref{convex}), the
corresponding state trajectory is uniquely determined by
\[
x(t) = x_0 + \int_0^t \left\langle  \sum_{j=1}^m f_j(s,x(s)) u_j, \, \omega(\diff u | s) \right\rangle \diff s
\]
and hence the following definition makes sense.

\begin{definition}[Relaxed arc]
A pair $(x,\omega)$ which is feasible, or admissible, for problem \eqref{convex} is called a relaxed arc.
\end{definition}

For optimization, strong problem \eqref{convex} presents an essential first step, as sequential-compactness is regained, guaranteeing the existence of optimal solutions under the sole existence of a relaxed arc \cite[Th.~1.2]{Vinter1993ConvexDuality}. For \emph{global} optimization though, the nonlinear dependence of \eqref{convex} on trajectories $x(t)$ is problematic, and an additional generalization of Young measures must be introduced, capturing both controls \emph{and} trajectories:

\begin{definition}[Occupation measure]
\label{th:occupMeas}
Given  a relaxed arc $(x,\omega)$,  its corresponding occupation measure
is defined by
\[
\mu(A\times B \times C ) := \int_{A} \!\!\! I_B(x(t) ) \omega(C | t) \, \diff t
\]
for all Borel subsets $A,B,C$ of $[0,T]$, $X$ and $U$, resp.
\end{definition}
That is, occupation measures assign to each set $A \times B \times C$ the total time ``spent'' on it by 
a relaxed arc $(x,\omega)$.  An alternative, equivalent definition could be
\[
\diff\mu(t,x,u) = \delta_{x(t)}(\diff x)\omega(\diff u | t)\diff t.
\]

By construction, an occupation measure satisfies
\begin{equation}\label{eq:weakdef}
\langle v, \mu \rangle = \int_0^T \left(\int_U \!\! v(t,x(t),u) \omega(\diff u | t) \right)\diff t
\end{equation}

for all continuous test functions $v \in {\mathcal C}([0,T] \times X \times U)$.
Indeed, by the Riesz representation theorem (see, e.g., \cite[\S{}21.5]{Royden}), an occupation measure could alternatively be defined as the unique measure satisfying \eqref{eq:weakdef} for all continuous test functions (recall the hypothesis of compact supports).

Evaluating now a continuously differentiable test function along a trajectory and making use of \eqref{eq:weakdef} reveals the following  lemma:
\begin{lemma}
\label{th:weakOdeOccMeas}
 Given a relaxed arc $(x,\omega)$,
its corresponding occupation measure satisfies the following \emph{linear} constraints for any test function $v \in {\mathcal C}^1(\R \times \R^n)$:
\begin{equation}\label{eq:weakDynamics}
\left\langle \frac{\partial v}{\partial t} + \frac{\partial v}{\partial x} \cdot \sum_{j=1}^m \,f_j \, u_j , \mu \right\rangle = v(T,x_T) - v(0,x_0).
\end{equation}
\end{lemma}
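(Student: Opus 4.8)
The plan is to evaluate the $\mathcal{C}^1$ test function $v$ along the trajectory, apply the fundamental theorem of calculus to $t \mapsto v(t,x(t))$, substitute the relaxed dynamics, and then recognize the resulting time integral as an integral against the occupation measure $\mu$ via the weak characterization \eqref{eq:weakdef}.

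First I would establish enough regularity to differentiate. By the integral representation of the trajectory recalled just before the definition of a relaxed arc, $x$ is an integral of the map $s \mapsto \langle \sum_{j=1}^m f_j(s,x(s))u_j, \omega(\diff u|s)\rangle$. Under Assumption \ref{th:standingAssumptions} the dynamics $f_j$ are continuous on the compact set $K$, hence uniformly bounded, and since each $\omega(\cdot|s)$ is a probability measure this integrand is bounded in $s$; measurability in $s$ follows from the Young measure hypothesis. Consequently $x$ is Lipschitz, hence absolutely continuous, and since $v \in \mathcal{C}^1(\R\times\R^n)$ the composite $t \mapsto v(t,x(t))$ is absolutely continuous with the chain rule holding for almost every $t$:
\[
\frac{\diff}{\diff t}\, v(t,x(t)) = \frac{\partial v}{\partial t}(t,x(t)) + \frac{\partial v}{\partial x}(t,x(t)) \cdot \dot{x}(t).
\]

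Next I would integrate this identity over $[0,T]$, use the boundary conditions $x(0)=x_0$ and $x(T)=x_T$, and substitute the relaxed dynamics $\dot{x}(t) = \langle \sum_{j=1}^m f_j(t,x(t))u_j, \omega(\diff u|t)\rangle$ from \eqref{convex}. The only mild manipulation is to bring both terms under a single integral against the fibre measure $\omega(\cdot|t)$: since $\partial v/\partial t$ and $\partial v/\partial x$ evaluated at $(t,x(t))$ do not depend on $u$, and each $\omega(\cdot|t)$ has unit mass, I may write $\partial v/\partial t(t,x(t)) = \int_U \partial v/\partial t(t,x(t))\,\omega(\diff u|t)$ and likewise pull the $u$-independent factor $\partial v/\partial x$ inside the integral multiplying $\sum_j f_j u_j$. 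This yields
\[
v(T,x_T) - v(0,x_0) = \int_0^T \int_U \left[ \frac{\partial v}{\partial t} + \frac{\partial v}{\partial x} \cdot \sum_{j=1}^m f_j\, u_j \right]\!(t,x(t),u)\;\omega(\diff u|t)\,\diff t.
\]

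Finally I would identify the right-hand side with $\langle w, \mu\rangle$, where $w(t,x,u) := \partial v/\partial t(t,x) + \partial v/\partial x(t,x)\cdot\sum_{j=1}^m f_j(t,x)\,u_j$. Because $v \in \mathcal{C}^1$ its partials are continuous and the $f_j$ are continuous by Assumption \ref{th:standingAssumptions}, so $w \in \mathcal{C}([0,T]\times X \times U)$ is an admissible test function, and the displayed time integral is exactly the right-hand side of \eqref{eq:weakdef}; hence it equals $\langle w,\mu\rangle$, which is precisely the left-hand side of \eqref{eq:weakDynamics}. The main obstacle is the regularity step: one must verify that the relaxed arc is absolutely continuous and that the classical chain rule is valid for the composition of a $\mathcal{C}^1$ function with a merely absolutely continuous trajectory. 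Everything afterward is a direct consequence of the unit-mass property of the Young measure fibres and of the defining property \eqref{eq:weakdef} of the occupation measure.
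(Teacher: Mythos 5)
Your proposal is correct and follows exactly the route the paper intends: the paper justifies this lemma in one line (``evaluating a continuously differentiable test function along a trajectory and making use of \eqref{eq:weakdef}''), which is precisely your chain-rule-plus-fundamental-theorem argument combined with the unit-mass property of the Young measure fibres. Your added regularity details (Lipschitz continuity of the arc from boundedness of the $f_j$ on the compact set $K$, hence absolute continuity and the a.e.\ chain rule) fill in the steps the paper leaves implicit, so nothing is missing and nothing diverges.
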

Note the use of a scalar product, denoted by a dot, between the gradient vector $\frac{\partial v}{\partial x}$ 
and the controlled dynamics. In \cite{Rubio1986book}, it is proposed then to relax strong problem \eqref{convex} by optimizing over all Borel measures satisfying \eqref{eq:weakDynamics} instead of simply occupation measures, yielding the following ``weak'' problem:
\begin{equation}\label{eq:weak}
\begin{array}{rcll}
p^*_W & = & \inf\limits_{\mu} & \displaystyle\left\langle \sum_{j=1}^m l_j(t,x(t))u_j(t) , \mu \right\rangle  \\
&& \mathrm{s.t.} & \forall v \in {\mathcal C}^1(K): \; v(T,x_T) - v(0,x_0) = \\
&&& \qquad \displaystyle\left\langle \frac{\partial v}{\partial t} + \frac{\partial v}{\partial x} 
\cdot \sum_{j=1}^m \,f_j \, u_j , \mu \right\rangle  \\
&&& \mu \in \PositiveMeasures(K \times U).
\end{array}
\end{equation}
Weak problem \eqref{eq:weak} is rigorously justified by the following essential result:
\begin{theorem}
\label{th:noGapVinter}
There is no relaxation gap between optimal control problem \eqref{convex} and  
measure LP \eqref{eq:weak}, viz.,
\[
p^*_S = p^*_W.
\]
\end{theorem}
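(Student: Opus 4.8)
The plan is to establish $p^*_S = p^*_W$ by proving the two inequalities separately. The inequality $p^*_W \le p^*_S$ is the ``relaxation'' direction and follows almost immediately from the constructions already in place, whereas $p^*_S \le p^*_W$ is the substantive ``no-gap'' direction, and that is where essentially all of the work lies.

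For $p^*_W \le p^*_S$ I would take an arbitrary relaxed arc $(x,\omega)$ feasible for \eqref{convex} and form its occupation measure $\mu$ as in Definition~\ref{th:occupMeas}. By Lemma~\ref{th:weakOdeOccMeas} this $\mu$ satisfies the dynamic constraint \eqref{eq:weakDynamics}, and it manifestly lies in $\PositiveMeasures(K \times U)$, hence is feasible for \eqref{eq:weak}. Moreover, applying the defining identity \eqref{eq:weakdef} to the test function $v(t,x,u) = \sum_{j=1}^m l_j(t,x)\,u_j$ shows that the weak cost $\langle \sum_j l_j u_j, \mu \rangle$ equals the strong cost of $(x,\omega)$. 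Since the feasible set of \eqref{eq:weak} thus contains the (cost-preserving image of the) occupation measures of all relaxed arcs, taking the infimum over all relaxed arcs yields $p^*_W \le p^*_S$.

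The reverse inequality is equivalent to showing that every $\mu$ feasible for \eqref{eq:weak} satisfies $\langle \sum_j l_j u_j, \mu \rangle \ge p^*_S$; taking the infimum over such $\mu$ then gives $p^*_W \ge p^*_S$. First I would pin down the time marginal of $\mu$: testing \eqref{eq:weakDynamics} with functions $v(t,x) = \theta(t)$ depending on time only annihilates the $\frac{\partial v}{\partial x}$ term and leaves $\langle \theta'(t), \mu \rangle = \theta(T) - \theta(0) = \int_0^T \theta'(t)\,\diff t$ for every $\theta \in {\mathcal C}^1$, so the projection of $\mu$ onto $[0,T]$ is the Lebesgue measure. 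This permits the disintegration $\diff\mu(t,x,u) = \diff t \otimes \nu_t(\diff x,\diff u)$ with $\nu_t \in \mathcal{P}(X \times U)$ for almost every $t$, which I would further split as $\nu_t(\diff x,\diff u) = \rho_t(\diff x)\,\omega_t(\diff u\,|\,x)$. Writing $\bar f(t,x) := \sum_{j=1}^m f_j(t,x)\int_U u_j\,\omega_t(\diff u\,|\,x)$, constraint \eqref{eq:weakDynamics} becomes precisely the weak form of the continuity (transport) equation driven by the averaged field $\bar f$, together with the two boundary Dirac terms at $(0,x_0)$ and $(T,x_T)$.

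The crux, and the step I expect to be the main obstacle, is then a superposition argument: I would represent the family $\{\rho_t\}$ as a mixture $\rho_t = \int \delta_{x(t)}\,\diff\eta$ over absolutely continuous curves $x(\cdot)$ solving $\dot x = \bar f(t,x)$ with $x(0)=x_0$ and $x(T)=x_T$, so that $\mu$ is recovered as an average of occupation measures of genuine relaxed arcs with control $\omega_t(\diff u\,|\,x(t))$. The delicate points are that the averaged field $\bar f$ is in general only measurable, not Lipschitz, in $x$ after disintegration, and that the boundary conditions must be honoured by $\eta$-almost every curve; this is exactly the measure-theoretic representation underlying the approach of \cite{Rubio1986book,Vinter1993ConvexDuality}, and I would invoke \cite[Th.~1.2 and its proof]{Vinter1993ConvexDuality} to carry it out rigorously. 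Granting the decomposition, linearity of the cost gives $\langle \sum_j l_j u_j, \mu \rangle = \int (\text{cost of the arc})\,\diff\eta \ge p^*_S$, since every arc appearing in the decomposition is feasible for \eqref{convex} and integrating the bound $p^*_S$ against the probability measure $\eta$ preserves it. This yields $p^*_W \ge p^*_S$ and closes the argument.
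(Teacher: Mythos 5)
Your easy direction coincides with the paper's: form the occupation measure of a relaxed arc, get feasibility from Lemma~\ref{th:weakOdeOccMeas}, and match the costs via \eqref{eq:weakdef}; this correctly yields $p^*_W \leq p^*_S$. For the substantive direction the paper takes a different route from yours: it makes no attempt at a trajectory-wise decomposition, but instead cites the Hahn--Banach separation argument of \cite{Vinter1978equivalence} and \cite[Theorem~2.3]{Vinter1993ConvexDuality}, which identifies the feasible set of \eqref{eq:weak} as the weak-$*$ closure of the convex hull of occupation measures of relaxed arcs, from which the inequality follows by linearity and weak-$*$ continuity of the cost. Your route --- pinning the time marginal to Lebesgue (that step is correct), disintegrating, forming the averaged field $\bar f$, reading \eqref{eq:weakDynamics} as a continuity equation with Dirac boundary data, and superposing --- is genuinely different and more constructive: where the paper gets only a closure of a convex hull, a superposition argument would represent \emph{every} feasible $\mu$ exactly as a mixture of occupation measures of relaxed arcs, which is a strictly stronger structural statement.

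The genuine gap is at your acknowledged crux, and it is the citation. \cite[Th.~1.2]{Vinter1993ConvexDuality} is the compactness/existence theorem for the strong relaxed problem --- exactly how the present paper uses it in \S\ref{sec:occupMeas} --- and neither its statement nor its proof represents a feasible measure of \eqref{eq:weak} as a mixture $\int \delta_{x(t)}\,\diff\eta$ over integral curves of $\bar f$; the Vinter result that does close the gap is Theorem~2.3, but invoking it would make your argument circular, since it essentially \emph{is} the statement being proved. What your route actually requires is the superposition principle for the continuity equation with a bounded Borel velocity field (Ambrosio/Smirnov; applicable here since the $f_j$ are continuous on the compact set $K$ and $U$ is bounded --- note that no Lipschitz regularity of $\bar f$ is needed for the representation direction, only for uniqueness, so the obstacle you flag is milder than you fear, but the correct theorem must be named and is not in your toolkit as cited). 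You would also need to supply the steps you gloss over: that $t \mapsto \rho_t$ has a narrowly continuous representative whose endpoint values are forced to equal $\delta_{x_0}$ and $\delta_{x_T}$ by testing \eqref{eq:weakDynamics} with functions supported near $t=0$ and $t=T$ (since these marginals are Diracs, $\eta$-almost every curve then honours both boundary conditions, disposing of your second delicate point); that the kernel $\omega_t(\diff u\,|\,x)$ admits a jointly Borel version so that $t \mapsto \omega_t(\cdot\,|\,x(t))$ is a legitimate Young measure along $\eta$-almost every curve; and that superposed curves satisfy $(t,x(t)) \in K$, which follows from $\mathrm{spt}\,\rho_t \subset X$, closedness of $X$ and continuity of the curves. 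With those ingredients supplied, your final Fubini/averaging step is sound; as written, however, the proof delegates its central step to a theorem that does not contain it.
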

\begin{proof}
This result is not new, so its proof is only sketched.
Obviously, direction $p^*_S \geq p^*_W$ is a simple consequence of Lemma~\ref{th:weakOdeOccMeas} and the continuity of the cost. The other direction, more involved, is the core of \cite{Vinter1978equivalence} and especially \cite[Theorem 2.3]{Vinter1993ConvexDuality}.
Synthetically, the authors use specialized Hahn-Banach type separation arguments to show that the admissible set for weak problem \eqref{eq:weak} is the closure of the convex hull of the set of occupation measures of the admissible solutions of \eqref{convex}. That is, one could interpret those results as a Krein-Milman theorem for optimal control problems, see e.g.
\cite[Chapter 14]{Royden} for background on the Krein-Milman theorem.
\end{proof}

Note that the use of relaxations and LP formulations of optimal control
problems (on ordinary differential equations and partial differential equations)
is classical, and can be traced back to the work by L. C. Young, Filippov, as well as Warga and Gamkrelidze, amongst many others. For more details and a historical
survey, see, e.g., \cite[Part III]{Fattorini}.

\subsection{Modal occupation measures and primal LP}
Occupation measures defined in \S\ref{sec:occupMeas} are supported on a subset of Euclidean space of dimension $n+m+1$. As such, they prove to be challenging for practical numerical resolution when the size of the problem increases, a phenomenon known colloquially in the (dual) dynamic programming literature as the ``curse of dimensionality''.

For instance, the numerical approach of \cite{Rubio1986book} proposes to consider a finite subset of the countably many linear constraints on $\mu$ given by \eqref{eq:weakDynamics}. Then, as a consequence of Tchakaloff's theorem \cite[Th.~B.12]{lasserre}, there exists an atomic measure with support included in the support of $\mu$, and satisfying the truncated linear constraints exactly. By gridding the support of $\mu$ with Dirac measures of unknown masses, one can then approach weakly-$*$ this atomic measure as the spatial resolution of the grid gets finer. This results in a finite-dimensional LP to solve, with a number $ \mathcal{O} \left( 1/\Delta^{n+m+1} \right)$ of unknowns, with $\Delta$ the grid resolution. Therefore, for a fixed resolution, the size of the LP grows exponentially in the size of the state and control spaces.

Other examples suffering a similar fate are LMI relaxations as presented later on in \S\ref{sec:momentRelax}, which the authors prefer over LP approximations for their greater rigor -- most noticeably the guarantee of obtaining lower bounds
 and their better convergence properties, see \cite[Section 5.4.2]{lasserre}. 
Indeed, as shown in \S\ref{sec:complexity}, for a given relaxation order, the size of the semi-definite blocks
in the LMIs also grows exponentially with the dimension of the underlying space.

For finite-dimensional optimization problems, structural properties of the problem can be exploited to define equivalent LPs using more measures, but of smaller dimensions, see \cite{Lasserre2006sparsity}. This allows for much faster numerical resolution.
Motivated by these savings, we now introduce specialized occupation measures to satisfy much the same goal for the optimal control of switched systems: 
\begin{definition}[Modal occupation measures]
\label{th:modalOccupMeas}
The $j$-th modal occupation measure $\mu_j(\diff t, \diff x) \in \PositiveMeasures(K)$ associated with occupation measure $\mu$  as in Def.~\ref{th:occupMeas}, is defined as
\[
    \mu_j(A \times B) :=
    \int_A \int_B \int_U  u_j \, \diff\mu(t,x,u),
\]
for all Borel subsets $A, B$ of $[0,T]$, $X$, resp.
\end{definition}
In words, modal occupation measures are the {\it projections} on $K=[0,T]\times X$ of the first moments of the occupation measure defined in \S\ref{sec:occupMeas} w.r.t. the controls. For each Borel subset of $K$, each modal occupation measure therefore specifies the time spent by the pair $(t,x(t))$ on each mode. As a consequence, Def.~\ref{th:modalOccupMeas} implies the following specialization of Lem.~\ref{th:weakOdeOccMeas} for switched systems:
\begin{corollary}
\label{th:weakModalOdeOccMeas}
 Given a relaxed arc $(x,\omega)$, 
its corresponding modal occupation measures satisfy the following \emph{linear} constraints:
\begin{equation}\label{eq:weakDynamicsModal}
\sum_{j=1}^m \left\langle \frac{\partial v}{\partial t} + \frac{\partial v}{\partial x}  \cdot f_j  , \mu_j \right\rangle = v(T,x_T)- v(0,x_0)
\end{equation}
for all test functions $v \in {\mathcal C}^1(K)$.
\end{corollary}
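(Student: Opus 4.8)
The plan is to derive the modal identity \eqref{eq:weakDynamicsModal} directly from the single occupation-measure identity \eqref{eq:weakDynamics} of Lemma~\ref{th:weakOdeOccMeas}, using only the algebraic structure of the discrete control set $U$ and the defining relation of the modal measures in Definition~\ref{th:modalOccupMeas}. The only genuinely new ingredient is the observation that on $U$, as given in \eqref{eq:U}, exactly one coordinate equals one and the rest vanish, so that $\sum_{j=1}^m u_j = 1$ identically on $U$, and hence $\mu$-almost everywhere since $\mathrm{spt}\,\mu \subset K \times U$.

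First I would fix an arbitrary $v \in \mathcal{C}^1(K)$ and rewrite the left-hand integrand of \eqref{eq:weakDynamics}. Since $\frac{\partial v}{\partial t}$ does not depend on $u$, I would multiply it by the constant $\sum_{j=1}^m u_j = 1$ to obtain, on $\mathrm{spt}\,\mu$,
\[
\frac{\partial v}{\partial t} + \frac{\partial v}{\partial x} \cdot \sum_{j=1}^m f_j\, u_j
= \sum_{j=1}^m \left( \frac{\partial v}{\partial t} + \frac{\partial v}{\partial x} \cdot f_j \right) u_j.
\]
This regroups the integrand into a sum of terms, each of the form $g_j(t,x)\, u_j$ with $g_j := \frac{\partial v}{\partial t} + \frac{\partial v}{\partial x} \cdot f_j \in \mathcal{C}(K)$, which is continuous because $v \in \mathcal{C}^1(K)$ and $f_j \in \mathcal{C}(K)$ by Assumption~\ref{th:standingAssumptions}.

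Next I would transfer each integral from $\mu$ to the modal measure $\mu_j$. Definition~\ref{th:modalOccupMeas} asserts precisely that, for indicator functions of Borel rectangles $A \times B \subset K$,
\[
\int_K I_{A \times B}\, \diff\mu_j = \int_{K \times U} I_{A \times B}(t,x)\, u_j\, \diff\mu(t,x,u);
\]
by the standard extension from indicators to simple functions and then to arbitrary $g \in \mathcal{C}(K)$ via monotone and dominated convergence, this yields $\langle g, \mu_j \rangle = \int_{K\times U} g(t,x)\, u_j\, \diff\mu(t,x,u)$. Applying this with $g = g_j$, summing over $j$, and invoking \eqref{eq:weakDynamics} for the right-hand side gives exactly \eqref{eq:weakDynamicsModal}.

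I do not expect any serious obstacle, as the statement is a routine consequence of Lemma~\ref{th:weakOdeOccMeas}. The one step deserving care is the passage from the rectangle-wise definition of $\mu_j$ to the integral formula against continuous test functions; this is the standard construction of a weighted (push-forward) measure, and the compactness of $K \times U$ together with the continuity of the integrands ensures all quantities are finite and the limiting arguments are valid.
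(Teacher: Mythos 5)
Your proposal is correct and takes essentially the same approach as the paper: both proofs rest on the observation that $\sum_{j=1}^m u_j = 1$ on $U$, which lets the $\frac{\partial v}{\partial t}$ term be absorbed into the sum over modes, followed by transferring each integral from $\mu$ to $\mu_j$ via Definition~\ref{th:modalOccupMeas}. The paper merely compresses your careful rectangle-to-continuous-test-function extension into a one-line appeal to Fubini's theorem, so the substance is identical.
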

\begin{proof}
Notice that by definition of $U$, the occupation measure satisfies
\[
\left\langle \frac{\partial v}{\partial t}, \mu \right\rangle = \left\langle \frac{\partial v}{\partial t} \cdot \sum_{j=1}^m u_j, \mu \right\rangle.
\]
The corollary is then the immediate application of Def.~\ref{th:modalOccupMeas} and Fubini's theorem. 
\end{proof}

Corollary \ref{th:weakModalOdeOccMeas} naturally points to the following alternative weak problem for switched systems:
\begin{equation}\label{eq:weakSwitchedAlternative}
\begin{array}{rccl}
p^* & = & \inf\limits_{(\mu_1,\ldots,\mu_m)} & \displaystyle  \sum_{j=1}^m  \left\langle l_j  , \mu_j \right\rangle  \\
& & \mathrm{s.t.} &  \forall v \in {\mathcal C}^1(K): \; v(T,x_T) - v(0,x_0) = \\
& & & \quad \displaystyle \sum_{j=1}^m \left\langle \frac{\partial v}{\partial t} + \frac{\partial v}{\partial x}   f_j , \mu_j \right\rangle  \\
& & & \mu_j \in \PositiveMeasures(K), \quad j=1,\ldots , m \\               
\end{array}   
\end{equation}

where the minimization is w.r.t. the vector of modal occupation measures $(\mu_1,\ldots,\mu_m)$.

This is rigorously justified by the following:
\begin{theorem}
\label{th:noGapModal}
There is no relaxation gap between measure LP \eqref{eq:weak}
and measure LP \eqref{eq:weakSwitchedAlternative}, viz.,
\begin{equation*}
p^* = p_{W}^*
\end{equation*}
\end{theorem}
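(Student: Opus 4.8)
The plan is to show that the two measure LPs are in fact equivalent, by exhibiting a cost- and feasibility-preserving correspondence between $\mu \in \PositiveMeasures(K\times U)$ and tuples $(\mu_1,\dots,\mu_m) \in \PositiveMeasures(K)^m$, and establishing $p^* \le p_W^*$ and $p^* \ge p_W^*$ separately. The whole argument rests on the combinatorial structure of $U$ in \eqref{eq:U}: it is the discrete set of standard basis vectors $e_1,\dots,e_m$, so that on $\mathrm{spt}\,\mu \subset K\times U$ one has $u_j \in \{0,1\}$ and $\sum_{j=1}^m u_j = 1$.

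For the inequality $p^* \le p_W^*$, I would take any $\mu$ feasible for \eqref{eq:weak}, define the modal measures $\mu_j$ via Def.~\ref{th:modalOccupMeas}, and check the three requirements of \eqref{eq:weakSwitchedAlternative}. Positivity $\mu_j \in \PositiveMeasures(K)$ is immediate since $u_j \ge 0$ on $U$. The dynamic constraint \eqref{eq:weakDynamicsModal} follows from exactly the computation in the proof of Cor.~\ref{th:weakModalOdeOccMeas}: that computation only uses $\sum_j u_j = 1$ on $U$ together with the weak dynamics \eqref{eq:weakDynamics}, so it applies verbatim to any feasible $\mu$, not merely to occupation measures arising from a relaxed arc. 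Finally the objective matches because, by Def.~\ref{th:modalOccupMeas}, $\sum_j \langle l_j,\mu_j\rangle = \langle \sum_j l_j u_j, \mu\rangle$. Thus every feasible point of \eqref{eq:weak} yields a feasible point of \eqref{eq:weakSwitchedAlternative} of equal cost, giving $p^* \le p_W^*$.

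The reverse inequality $p^* \ge p_W^*$ is the more substantial direction, requiring an explicit reconstruction. Given any feasible tuple $(\mu_1,\dots,\mu_m)$ for \eqref{eq:weakSwitchedAlternative}, I would set $\mu := \sum_{j=1}^m \mu_j \otimes \delta_{e_j}$, which manifestly belongs to $\PositiveMeasures(K\times U)$. Feasibility and cost then follow by evaluating the integrands at the vertices: since $(e_k)_j$ is the Kronecker delta, $\sum_j f_j(t,x)\,(e_k)_j = f_k(t,x)$ and $\sum_j l_j(t,x)\,(e_k)_j = l_k(t,x)$, so testing the right-hand side of the constraint in \eqref{eq:weak} against $\mu$ collapses exactly to the left-hand side $\sum_j \langle \partial_t v + \partial_x v \cdot f_j, \mu_j\rangle$ of \eqref{eq:weakDynamicsModal}, and likewise the objective $\langle \sum_j l_j u_j, \mu\rangle$ collapses to $\sum_j \langle l_j, \mu_j\rangle$. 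Hence $\mu$ is feasible for \eqref{eq:weak} with the same cost, giving $p_W^* \le p^*$.

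The two constructions are in fact mutually inverse on $\PositiveMeasures(K\times U)$ — this is just the canonical identification $\PositiveMeasures(K\times U) \cong \PositiveMeasures(K)^m$ arising from the disjoint decomposition $K\times U = \bigsqcup_j (K\times\{e_j\})$ — so the two LPs are literally the same problem and $p^* = p_W^*$. The only point demanding care, and the step I expect to be the crux, is verifying that the reconstructed $\mu$ satisfies the weak dynamics \eqref{eq:weakDynamics} for every $v \in {\mathcal C}^1(K)$: this is precisely where the vertex structure of $U$ is essential, since it is the identity $u=e_k \Rightarrow \sum_j f_j u_j = f_k$ that converts the single constraint \eqref{eq:weakDynamics} into the modal system \eqref{eq:weakDynamicsModal} and back.
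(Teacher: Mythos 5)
Your proof is correct, and while your forward inequality $p^*\le p_W^*$ coincides with the paper's (both apply the computation behind Cor.~\ref{th:weakModalOdeOccMeas} to an arbitrary feasible $\mu$ --- and you rightly flag that the corollary as stated concerns occupation measures of relaxed arcs, whereas its proof only uses $\sum_j u_j=1$ on $U$ together with Def.~\ref{th:modalOccupMeas}, a point the paper glosses over), your reverse direction takes a genuinely different and more elementary route. The paper forms $\bar\mu:=\sum_j\mu_j$, invokes the Radon--Nikod\'ym theorem to obtain densities $\bar u_j$ with $\mu_j=\bar u_j\,\bar\mu$ and $\sum_j\bar u_j=1$ $\bar\mu$-almost everywhere, and then assembles $\diff\mu(t,x,u)=\bigl(\sum_j\bar u_j(t,x)\,\delta_{e_j}(\diff u)\bigr)\diff\bar\mu(t,x)$; your $\mu=\sum_{j}\mu_j\otimes\delta_{e_j}$ is exactly the same measure, reached without any Radon--Nikod\'ym machinery. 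What the paper's detour buys is the explicit relaxed-control reading: $\bar u_j$ is a measurable $[0,1]$-valued duty cycle, so the construction makes visible the Young-measure structure underlying \eqref{eq:weakSwitchedAlternative}, which the paper exploits thematically elsewhere (e.g., in the trajectory extraction of \S\ref{sec:extract}). What your route buys is arguably more: since $U$ is the finite vertex set, $K\times U=\bigsqcup_{j=1}^m K\times\{e_j\}$ and $u_j=I_{\{e_j\}}(u)$ on $U$, so the modal measure of Def.~\ref{th:modalOccupMeas} is literally the restriction map $\mu\mapsto\bigl(\mu(\,\cdot\,\times\{e_j\})\bigr)_{j=1,\ldots,m}$; your two constructions are then mutually inverse, the two LPs are isomorphic rather than merely equal in value, and the theorem follows in one stroke --- a cleaner and slightly stronger conclusion than the paper's asymmetric two-inequality argument.
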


\begin{proof}
We first show $p^* \leq p_{W}^*$. Consider an admissible measure $\mu$ for \eqref{eq:weakSwitchedAlternative}. Then, by Cor.~\ref{th:weakModalOdeOccMeas}, there exists an admissible vector $(\mu_1, \ldots \mu_m)$ satisfying \eqref{eq:weakDynamicsModal}.
As the cost can be treated in an analogous fashion, this shows that any admissible solution for 
\eqref{eq:weakSwitchedAlternative} is also admissible for \eqref{eq:weak} with the same cost, hence $p^* \leq p_{W}^*$.

For the reverse inequality, consider now a vector $(\mu_1,\ldots,\mu_m)$ admissible for \eqref{eq:weak}, and define
\begin{equation}\label{eq:defSumMeas}
\bar{\mu} := \sum_{j=1}^m \mu_j.
\end{equation}

By construction, each $\mu_j$ is absolutely continuous with respect to $\bar{\mu}$, such that by the Radon-Nikod\'ym Theorem \cite[\S 18.4]{Royden}, there exists a non-negative, measurable function  $\bar{u}_j(t,x)$  such that $\mu_j = \bar{u}_j \, \bar{\mu}$. Injecting this back in \eqref{eq:defSumMeas} shows that, in addition, $\sum_{j=1}^m \bar{u}_j = 1$, $\bar{\mu}$-almost everywhere.
Define now $\mu \in \PositiveMeasures(K \times U)$, with $U$ as in \eqref{eq:U}, such that
\[
    \diff\mu(t,x,u) : = \left( \sum_{j=1}^m \bar{u}_j(t,x) \delta_1(\diff u_j | t,x)  \right) \diff\bar{\mu}(t,x),
\]
such that by definition
\[
    \int_U \! u_j \, \diff\mu(t,x,u)  =  \bar{u}_j(t,x) \, \diff\bar{\mu}(t,x),
\]
where the last two equations are understood in a weak sense.
Working out the weak dynamics of \eqref{eq:weakSwitchedAlternative} leads then to:
\begin{align}
\sum\limits_{j=1}^m  \left\langle \frac{\partial v}{\partial t} + \frac{\partial v}{\partial x} \cdot f_j , \mu_j \right \rangle & =
\sum\limits_{j=1}^m \left\langle \left( \frac{\partial v}{\partial t} + \frac{\partial v}{\partial x} \cdot f_j \right) \bar{u}_j , \bar{\mu}  \right\rangle \notag \\
& = \left\langle \frac{\partial v}{\partial t} + \sum\limits_{j=1}^m \frac{\partial v}{\partial x} \cdot f_j  \, \bar{u}_j , \bar{\mu} \right\rangle \\
& = \left\langle \frac{\partial v}{\partial t} + \sum\limits_{j=1}^m \frac{\partial v}{\partial x} \cdot f_j \, u_j , \mu \right\rangle. \notag
\end{align}
As the cost can be treated in a similar manner, this concludes $p^* \geq p_{W}^*$, hence the proof.
\end{proof}

Theorem \eqref{th:noGapModal} fulfills the objective laid at the beginning of this section. Indeed structured LP \eqref{eq:weakSwitchedAlternative} involves $m$ modal occupation measures supported on spaces of dimension $n+1$, instead of a single occupation measure on a space of dimension $n+m+1$ as in unstructured LP \eqref{eq:weak}.

\subsection{HJB inequalities and dual LP}
\label{sec:duality}

Before investigating the practical implications on LMI relaxations of structured measure LP \eqref{eq:weakSwitchedAlternative}, we explore its conic dual. This is an interesting result in its own right for so-called ``verification theorems'' which supply necessary and sufficient conditions in the form of more traditional HJB  inequalities. 
In addition, practical numerical resolution of the LMI relaxations by primal/dual interior-point methods \cite{mosek} implies that a strengthening of this dual will be solved as well, as shown later on in \S\ref{sec:SOS}. 

In this section, it is first shown that the solution of \eqref{eq:weakSwitchedAlternative} is attained whenever an admissible solution exists. Then, the dual problem of \eqref{eq:weakSwitchedAlternative}, in the sense of conic duality, is presented. This leads directly to a system of subsolutions of HJB equations. Although, the cost of this problem might not be attained, it is however shown that there is no duality gap between the conic programs.

First of all, we establish that whenever the optimal cost of \eqref{eq:weakSwitchedAlternative} is finite, there exists a vector of measures attaining the value of the problem:
\begin{lemma}
\label{th:solAttained}
 If $p^*$ is finite in problem (\ref{eq:weakSwitchedAlternative}),
there exists an admissible $(\mu_1^*,\ldots,\mu_m^*)$ attaining the infimum, viz., such that
\[
  \sum_{j=1}^m \left\langle l_j , \mu_j^* \right\rangle  =  p^*.
\]
\end{lemma}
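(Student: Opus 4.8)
The plan is to prove existence of a minimizer by the direct method of the calculus of variations: exhibit a minimizing sequence, extract a weakly-$*$ convergent subsequence using a compactness argument, and check that the limit is both admissible and optimal. The key tool is that bounded subsets of $\PositiveMeasures(K)$ are sequentially weakly-$*$ compact (a consequence of the Banach-Alaoglu theorem, since $K$ is compact so that $\SignedMeasures(K)$ is the dual of the separable Banach space $\mathcal{C}(K)$). The central difficulty, and the first step I would address, is establishing a uniform \emph{a priori} mass bound on any minimizing sequence $(\mu_1^{(k)},\ldots,\mu_m^{(k)})$, so that each sequence $(\mu_j^{(k)})_k$ lives in a fixed bounded, hence weakly-$*$ sequentially compact, subset of $\PositiveMeasures(K)$.

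To obtain the mass bound I would exploit the weak dynamics constraint \eqref{eq:weakDynamicsModal} with a cleverly chosen test function. Taking $v(t,x) = t$ gives $\frac{\partial v}{\partial t} = 1$ and $\frac{\partial v}{\partial x} = 0$, so the constraint collapses to $\sum_{j=1}^m \langle 1, \mu_j \rangle = T - 0 = T$. Since each $\mu_j$ is non-negative, this pins the total mass: $\sum_{j=1}^m \mu_j(K) = T$, and therefore $\mu_j(K) \leq T$ for every $j$. This is a uniform bound independent of $k$, so each coordinate sequence is confined to the weakly-$*$ compact set $\{\mu \in \PositiveMeasures(K) : \mu(K) \leq T\}$. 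Applying sequential compactness coordinate by coordinate (and passing to a common subsequence, which is possible since there are finitely many $j$), I extract a limit $(\mu_1^*,\ldots,\mu_m^*)$ with $\mu_j^{(k)} \rightharpoonup^* \mu_j^*$ for each $j$.

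It then remains to verify that the limit is admissible and attains $p^*$. Admissibility is the passage to the limit in the linear constraint \eqref{eq:weakDynamicsModal}: for each fixed $v \in \mathcal{C}^1(K)$, the integrand $\frac{\partial v}{\partial t} + \frac{\partial v}{\partial x} \cdot f_j$ is continuous on the compact set $K$ (here I use that $f_j$ is continuous by Assumption~\ref{th:standingAssumptions} and $v \in \mathcal{C}^1$), hence a valid test function against which weak-$*$ convergence applies term by term; summing the finitely many terms, the constraint for $(\mu_j^*)$ follows, and each $\mu_j^* \in \PositiveMeasures(K)$ since weak-$*$ limits of non-negative measures are non-negative. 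For optimality, each $l_j$ is continuous on $K$, so $\langle l_j, \mu_j^{(k)} \rangle \to \langle l_j, \mu_j^* \rangle$, whence $\sum_{j=1}^m \langle l_j, \mu_j^* \rangle = \lim_k \sum_{j=1}^m \langle l_j, \mu_j^{(k)} \rangle = p^*$ along the minimizing subsequence. Thus $(\mu_1^*,\ldots,\mu_m^*)$ is admissible with cost exactly $p^*$, proving the infimum is attained.

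I expect the only genuinely substantive point to be the mass bound, but here it is disposed of cleanly by the $v(t,x)=t$ trick; the remaining steps are standard weak-$*$ limit arguments that go through precisely because continuity of the problem data (Assumption~\ref{th:standingAssumptions}) guarantees all relevant integrands are legitimate test functions against which weak-$*$ convergence can be invoked. A minor wrinkle worth a sentence is that the cost functional is continuous rather than merely lower semicontinuous for the weak-$*$ topology, so no relaxation of the objective is needed in the limit; this is again a consequence of the compactness of $K$ together with continuity of each $l_j$.
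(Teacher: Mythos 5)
Your proof is correct and follows essentially the same route as the paper: the mass bound via the test function $v(t,x)=t$ yielding $\sum_{j=1}^m \langle 1,\mu_j\rangle = T$, followed by weak-$*$ sequential compactness from Banach--Alaoglu applied to a minimizing sequence. If anything, you are more complete than the paper's own (rather terse) proof, since you explicitly verify the two limit-passage steps it leaves implicit: that the linear constraint \eqref{eq:weakDynamicsModal} is preserved under weak-$*$ limits because the integrands $\frac{\partial v}{\partial t} + \frac{\partial v}{\partial x}\cdot f_j$ are continuous on the compact set $K$, and that the cost is weak-$*$ continuous (not merely lower semicontinuous) since each $l_j$ is continuous.
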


\begin{proof}
We first show that the mass of each measure is bounded. This is true for each $\mu_j$, since test function $v(t,x)=t$ imposes $\sum_{j=1}^m \langle 1 , \mu_j \rangle = T$. Then, following Alaoglu's theorem \cite[\S 15.1]{Royden}, the unit ball in the vector space of compactly supported measures is compact in the weak-$*$ topology. Therefore, any sequence of admissible solutions for \eqref{eq:weakSwitchedAlternative} possesses a converging subsequence. Since this must be true for any sequence, this is true for any minimizing sequence, which concludes the proof.
\end{proof}

Now, remark that \eqref{eq:weakSwitchedAlternative} can be seen as an instance of a conic program, called hereafter the primal,  in standard form (see for instance \cite{Barvinok2002convexity}):
\begin{equation}\label{eq:primal}
\begin{array}{rcll}
p^* & = & \displaystyle \inf\limits_{z_p} & \displaystyle \langle  z_p, c \rangle_p  \\
&& \mathrm{s.t. }  & \mathcal{A} \, z_p = b, \\
&&& x_p \in  E^+_p
\end{array}
\end{equation}
where
\begin{itemize}
\item  decision variable
$z_p := (\mu_1,\ldots,\mu_m)$
belongs to vector space
$E_p := \SignedMeasures(K)^m $
equipped with the weak-$*$ topology;

\item the cost
$c := (l_1,\ldots,l_m)$ belongs to vector space $F_p := {\mathcal C}(K)^m $ equipped
with the strong topology;

\item $\langle \cdot , \cdot \rangle_p: \; E_p \times F_p \rightarrow \R$ is the duality bracket given by the integration of continuous functions with respect to Borel measures,  since $E_p = F'_p$, with the prime denoting the topological dual; 
\item  the cone $E^+_p$ is the non-negative orthant of $E_p$; 

\item the linear operator $\mathcal{A}:E_p \rightarrow {\mathcal C}^1(K)'$ is given by
\[
z_p \mapsto \mathcal{A} z_p =  \sum_{j=1}^m \mathcal{L}_j \mu_j
\]
where $\mathcal{L}_j$ is the adjoint operator of $\mathcal{L}'_j: {\mathcal C}^1(K) \rightarrow {\mathcal C}(K)$ defined by
\[
v \mapsto \mathcal{L}'_j v := \frac{\partial v}{\partial t} +  \frac{\partial v}{\partial x} \cdot f_j;
\]

\item right hand side
$b := \delta_{(T,x_T)}(\diff t, \diff x) - \delta_{(0,x_0)}(\diff t, \diff x)$
belongs to vector space
$E_d :=  {\mathcal C}^1(K)'$.

\end{itemize}

\begin{lemma}
The conic dual of LP problem \eqref{eq:weakSwitchedAlternative} is given by
\begin{equation}\label{eq:dualExplicit}
\begin{array}{rcll}
d^* & = & \sup\limits_{v} & v(T,x_T) - v(0,x_0)  \\
&& \mathrm{s.t.}  & l_j - \mathcal{L}'_j v \geq 0, \:\:\forall \: (t,x) \in  K, \: j = 1 \ldots m, \\
&&& v \in {\mathcal C}^1(K).
\end{array}
\end{equation}
\end{lemma}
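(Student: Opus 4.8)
The plan is to derive the dual by mechanically applying the standard conic duality recipe already set up in the excerpt, since the primal has been cast in the abstract form \eqref{eq:primal}. The dual of a conic program $\inf\{\langle z_p, c\rangle_p : \mathcal{A}z_p = b,\ z_p \in E_p^+\}$ is $\sup\{\langle b, v\rangle_d : c - \mathcal{A}'v \in (E_p^+)^*\}$, where $\mathcal{A}'$ is the adjoint of $\mathcal{A}$ and the supremum is taken over the dual variable $v$ in the predual of $E_d = {\mathcal C}^1(K)'$. First I would identify the dual variable: since the equality constraint lives in $E_d = {\mathcal C}^1(K)'$ and $b$ is a pair of Dirac measures, the natural pairing $\langle b, v\rangle$ is evaluation, so the dual variable is a test function $v \in {\mathcal C}^1(K)$, and the objective $\langle b, v\rangle = v(T,x_T) - v(0,x_0)$ falls out immediately from the definition of $b$ as $\delta_{(T,x_T)} - \delta_{(0,x_0)}$.

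Next I would compute the adjoint $\mathcal{A}'$ and the cone constraint. By construction $\mathcal{A} z_p = \sum_{j=1}^m \mathcal{L}_j \mu_j$ where each $\mathcal{L}_j$ is the adjoint of $\mathcal{L}'_j v = \frac{\partial v}{\partial t} + \frac{\partial v}{\partial x}\cdot f_j$. Hence, for any admissible $v$, the pairing $\langle \mathcal{A}z_p, v\rangle = \sum_j \langle \mathcal{L}_j \mu_j, v\rangle = \sum_j \langle \mu_j, \mathcal{L}'_j v\rangle$, which identifies $\mathcal{A}'v$ componentwise as $(\mathcal{L}'_1 v, \ldots, \mathcal{L}'_m v) \in F_p = {\mathcal C}(K)^m$. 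The dual cone of the non-negative orthant $E_p^+ = \PositiveMeasures(K)^m$, under the integration pairing between ${\mathcal C}(K)^m$ and $\SignedMeasures(K)^m$, is the cone of componentwise non-negative continuous functions. Therefore the constraint $c - \mathcal{A}'v \in (E_p^+)^*$ reads $l_j - \mathcal{L}'_j v \geq 0$ pointwise on $K$ for each $j = 1,\ldots,m$, which is exactly the HJB inequality system in \eqref{eq:dualExplicit}. Assembling the objective and constraints yields the claimed dual program.

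The main obstacle is not the formal manipulation but ensuring the duality bracket $\langle b, v\rangle_d$ is well-defined and that the adjoint identity $\langle \mathcal{A}z_p, v\rangle_d = \langle z_p, \mathcal{A}'v\rangle_p$ holds in the appropriate topological pairing. Here I would invoke Lemma~\ref{th:weakOdeOccMeas} (and its modal specialization, Corollary~\ref{th:weakModalOdeOccMeas}), which is precisely the statement that $\sum_j \langle \mathcal{L}'_j v, \mu_j\rangle = v(T,x_T) - v(0,x_0)$ for test functions, confirming that the pairing between $b \in {\mathcal C}^1(K)'$ and $v \in {\mathcal C}^1(K)$ is exactly evaluation at the endpoints and that the adjoint relation is consistent with the weak dynamics already established. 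The duality between ${\mathcal C}^1(K)$ and its topological dual, together with compactness of $K$, makes all these brackets finite and the adjoint well-posed.

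I would close by remarking that this lemma only asserts the \emph{form} of the dual; weak duality $d^* \leq p^*$ is automatic from the conic construction (any dual-feasible $v$ and primal-feasible $z_p$ satisfy $\langle b,v\rangle = \langle \mathcal{A}z_p, v\rangle = \langle z_p, \mathcal{A}'v\rangle \leq \langle z_p, c\rangle$ by non-negativity), while the absence of a duality gap $d^* = p^*$ is a separate matter to be addressed subsequently, consistent with the excerpt's announcement that zero duality gap is proven later in \S\ref{sec:duality}.
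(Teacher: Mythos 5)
Your proposal is correct and follows essentially the same route as the paper, which also casts \eqref{eq:weakSwitchedAlternative} in the standard conic form \eqref{eq:primal} and invokes the Anderson--Nash/Barvinok duality scheme, merely stating that ``the lemma just details this dual problem'' where you spell out the adjoint $\mathcal{A}'v = (\mathcal{L}'_1 v,\ldots,\mathcal{L}'_m v)$, the objective $\langle b,v\rangle = v(T,x_T)-v(0,x_0)$, and the (pre-)dual cone ${\mathcal C}^+(K)^m$ explicitly. Your closing remark correctly separates the form of the dual (this lemma, with weak duality automatic) from the absence of a duality gap, which the paper indeed defers to Theorem~\ref{th:noGap}.
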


\begin{proof}
Following standard results of conic duality (see \cite{Anderson} or \cite{Barvinok2002convexity}), the conic dual of \eqref{eq:primal} is given by
\begin{equation}\label{dual}
\begin{array}{rcll}
d^* & = & \sup\limits_{z_d} & \langle b, z_d \rangle_d  \\
&& \mathrm{s.t.}  & c-\mathcal{A}' z_d \in {\mathcal C}^+(K)^m, \\
&&& z_d \in F_d
\end{array}
\end{equation}
where
\begin{itemize}
\item  decision variable $z_d :=  v$ belongs to vector space $F_d := {\mathcal C}^1(K)$;

\item  the (pre-)dual cone is ${\mathcal C}^+(K)^m$, i.e., $E^+_p$ is dual to ${\mathcal C}^+(K)^m$; 

\item $\langle \cdot , \cdot \rangle_d: \; E_d \times F_d \rightarrow \R$ is the appropriate duality pairing between $E_d$ and $F_d$.
\end{itemize}
The lemma just details this dual problem.
\end{proof}

Once the duality relationship established, the question arises of whether a duality gap occur between linear problems \eqref{eq:weakSwitchedAlternative} and \eqref{eq:dualExplicit}. The following theorem discards such a possibility:

\begin{theorem}
\label{th:noGap}
There is no duality gap between primal LP \eqref{eq:weakSwitchedAlternative} and dual LP \eqref{eq:dualExplicit}: if there is an admissible vector for \eqref{eq:weakSwitchedAlternative}, then
\[
p^* = d^*.
\]
\end{theorem}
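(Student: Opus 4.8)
The plan is to pair an elementary weak-duality bound with a weak-$*$ closedness argument for the primal constraint cone, and then to conclude via a standard no-gap theorem for conic linear programs, as found in \cite{Anderson} or \cite{Barvinok2002convexity}. Throughout I would rely on the conic reformulation \eqref{eq:primal} of the primal and its dual \eqref{dual}, together with the compactness of $K$.

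First I would settle weak duality, $d^* \leq p^*$, which requires no topology. For any $v$ admissible in \eqref{eq:dualExplicit} and any $(\mu_1,\dots,\mu_m)$ admissible in \eqref{eq:weakSwitchedAlternative}, the dynamic constraint reads $v(T,x_T)-v(0,x_0) = \sum_{j=1}^m \langle \mathcal{L}'_j v, \mu_j\rangle$, while dual admissibility gives $l_j - \mathcal{L}'_j v \geq 0$ on $K$ and $\mu_j \in \PositiveMeasures(K)$ gives $\langle l_j - \mathcal{L}'_j v, \mu_j\rangle \geq 0$. Summing over $j$ yields $v(T,x_T)-v(0,x_0) \leq \sum_{j=1}^m \langle l_j, \mu_j\rangle$; taking the supremum over $v$ on the left and the infimum over $(\mu_j)$ on the right gives $d^* \leq p^*$.

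The substance lies in the reverse inequality. I would reduce it to the weak-$*$ closedness of the cone
\[
\mathcal{K} := \left\{ \left( \mathcal{A}\, z_p, \, \langle z_p, c \rangle_p + s \right) \; : \; z_p \in E^+_p, \; s \geq 0 \right\} \subseteq E_d \times \R,
\]
since, by the standard conic-duality theorem, primal consistency (granted by hypothesis) together with closedness of $\mathcal{K}$ forces $p^* = d^*$: closedness of $\mathcal{K}$ is exactly lower semicontinuity of the primal value function at the right-hand side $b$, which precludes a gap. To verify closedness, I would take a sequence $(\mathcal{A} z_p^{(n)}, \langle z_p^{(n)}, c\rangle_p + s_n) \to (b', r)$ with $z_p^{(n)} = (\mu_1^{(n)},\dots,\mu_m^{(n)})$, extract a weak-$*$ convergent subsequence by Alaoglu's theorem \cite[\S 15.1]{Royden}, and pass to the limit using that $\mathcal{A}$ and $c$ act by integration of the continuous functions $\mathcal{L}'_j v$ and $l_j$ on the compact set $K$, so that both are continuous along weak-$*$ convergent sequences; since $E^+_p$ is itself weak-$*$ closed, the limit belongs to $\mathcal{K}$.

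The main obstacle is supplying the a priori mass bound needed before Alaoglu can be applied, since $\mathcal{K}$ ranges over the whole cone $E^+_p$ rather than a feasible slice. Here I would reuse the device of Lemma~\ref{th:solAttained}: testing the image $\mathcal{A} z_p^{(n)}$ against $v(t,x)=t$, for which $\mathcal{L}'_j v \equiv 1$, shows that the total mass $\sum_{j=1}^m \langle 1, \mu_j^{(n)}\rangle$ equals the pairing of $\mathcal{A} z_p^{(n)}$ with $t$ and therefore converges to $\langle b', t\rangle$; in particular the masses stay bounded along the sequence, which is precisely what is needed to invoke weak-$*$ compactness. The convergence of $s_n$ to some $s_0 \geq 0$ then follows from finiteness of $r$ and of $\langle z_p^{(n)}, c\rangle_p$, completing the closedness argument and hence the proof.
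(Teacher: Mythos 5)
Your proposal takes essentially the same route as the paper's own proof: both reduce the theorem to weak-$*$ closedness of the primal image cone in the Anderson--Nash conic duality framework, and both establish that closedness via a mass bound obtained from the test function $v(t,x)=t$ (for which $\mathcal{L}'_j v \equiv 1$), followed by Alaoglu's theorem and the weak-$*$ continuity of $\mathcal{A}$. If anything, your treatment is slightly more careful on one point: you bound the masses for \emph{arbitrary} sequences in the cone by using the convergence of the pairing of $\mathcal{A}\,z_p^{(n)}$ with $v=t$, whereas the paper's sketch invokes Lemma~\ref{th:solAttained}, whose mass bound strictly applies only to feasible sequences satisfying $\mathcal{A}\,z_p = b$.
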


\begin{proof}
One can show, following \cite[Th. 3.10]{Anderson} (see also the exposition in \cite[\S 4]{Barvinok2002convexity}), that the closure of the cone
\[
R := \left\lbrace (\langle z_p, c \rangle_p , \mathcal{A} \, z_p) : \; z_p \in E^+_p \right\rbrace
\]
belongs to $\R \times E_d$. Note that the weak-$*$ topology is implicit in the definition of $E_d$, and the closure of $R$ should be understood in such a weak sense.

To prove closure, one may show that, for any sequence of admissible solutions $(z_p^{(n)})_{n \in \N}$, all accumulation points of $(\langle z_p^{(n)}, c \rangle_p , \mathcal{A} \, z_p^{(n)})_{n \in \N}$ belong to $R$. Note that Lemma \ref{th:solAttained} establishes that any sequence $(z_p^{(n)})_{n \in \N}$ has a converging subsequence. Therefore, all that is left to show is the weak-$*$ continuity of $\mathcal{A}$, hence of each $\mathcal{L}_j$. Following \cite{sicon}, this can be shown by noticing that
each $\mathcal{L}'_j$ is continuous for the strong topology of ${\mathcal C}^1(K)$, hence for its associated weak topologies.
Operators $\mathcal{L}'_j$, hence $\mathcal{A}$, are therefore weakly-$*$ continuous, and each sequence $(\langle x_p^{(n)}, c \rangle_p , \mathcal{A} x_p^{(n)})_{n \in \N}$ converges in $R$, which concludes the proof.
\end{proof}

Note that what is not asserted in Th.~\ref{th:noGap} is the existence of a continuous function for which the optimal cost is attained in dual problem (\ref{eq:dualExplicit}). Indeed, it is a well-known fact that value functions of optimal control problems, to which $v$ is closely related, may not be continuous, let alone continuously differentiable. However, there does exist an admissible vector of measures for which the optimal cost of primal \eqref{eq:weakSwitchedAlternative} is attained (whenever there exists an admissible solution), following Lemma \ref{th:solAttained}. This gives practical motivations for working in a purely primal approach on measures, as the dual problem will be solved anyway as a side product, see in the later sections of this article.

We close this section by detailing how dual problem \eqref{eq:dualExplicit} can nonetheless be used analytically as a ``verification theorem'', following the discussion of \cite{Vinter1978equivalence} for the unstructured case, that is following the dual of \eqref{eq:weakSwitchedAlternative} instead.

Denote by $\tau_j$ the time subset for which the $j$-th mode is ``active'':
\begin{equation}\label{eq:tauj}
  \tau_j := \mathrm{spt} \pi_t \mu_j,
\end{equation}
where $\pi_t \mu_j$ is the projection, or marginal of $\mu_j$ on the time axis, viz., the unique measure such that
\[
 \langle v(t), \pi_t \mu_j \rangle = \int_{K} \!\! v(t) \, \diff\mu_j(t,x)
\]
for all continuous test functions $v$ and $j=1,\ldots,m$. If an admissible control $u(t)$ is piecewise-continuous, then obviously $\tau_j := \int_0^T \!\! I_{ \lbrace e_j \rbrace}(u(t)) \, \diff t$, where $e_j$ is the $j$-th unit vector of $\R^m$. 
Note however that the intersection of several of the $\tau_j$ might be of positive Lebesgue measure. In this case, relaxed controls can only be weakly approximated by fast oscillating sequences of classical controls. In the language of differential inclusions, this consists of realizing a control in the convexified vector field but not belonging to the original vector field.

\begin{corollary}
\label{th:verification}

Let $(x,\omega)$ be a relaxed arc,
and let the intervals $\tau_j$ be computed as in \eqref{eq:tauj} from their occupation measures. Then there exists
a sequence $(v_n)_{n\in \N} \in {\mathcal C}^1(K)$ admissible for \eqref{eq:dualExplicit}, such that
\[
\lim_{n \rightarrow \infty} \int_{\tau_j} \!\!\! \left( l_j(t,x(t))- \mathcal{L}'_j v_n(t,x(t)) \right) \diff t = 0, \: j = 1 \ldots m, 
\]
if and only if $(x,\omega)$ is a globally optimal solution of optimal control problem \eqref{convex}.
\end{corollary}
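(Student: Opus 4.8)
The plan is to read this corollary as a complementary–slackness statement for the primal–dual pair \eqref{eq:weakSwitchedAlternative}–\eqref{eq:dualExplicit}, and to convert the time integrals over the active sets $\tau_j$ into integrals against the modal occupation measures $\mu_j$ of the arc $(x,\omega)$. Writing $\bar{u}_j(t) := \langle u_j, \omega(\diff u \mid t)\rangle$ for the time density of the $j$-th mode, Definition~\ref{th:modalOccupMeas} gives $\langle \phi, \mu_j\rangle = \int_0^T \phi(t,x(t))\,\bar{u}_j(t)\,\diff t$ for every $\phi \in \mathcal{C}(K)$; in particular the cost of the arc is $\sum_{j=1}^m \langle l_j, \mu_j\rangle$. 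Moreover $\pi_t\mu_j$ has Lebesgue density $\bar{u}_j$, so that $\bar{u}_j$ vanishes off $\tau_j=\mathrm{spt}\,\pi_t\mu_j$.

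First I would record two elementary facts. For any $v$ admissible for \eqref{eq:dualExplicit}, Corollary~\ref{th:weakModalOdeOccMeas} yields $\sum_j \langle \mathcal{L}'_j v, \mu_j\rangle = v(T,x_T)-v(0,x_0)$, whence
\[
\sum_{j=1}^m \big\langle l_j - \mathcal{L}'_j v,\ \mu_j\big\rangle \;=\; \Big(\sum_{j=1}^m \langle l_j,\mu_j\rangle\Big)-\big(v(T,x_T)-v(0,x_0)\big),
\]
every summand being nonnegative because $l_j-\mathcal{L}'_j v\geq 0$ on $K$ and $\mu_j\geq0$. The crucial link between the weighted and the unweighted integrals is then
\[
0\;\leq\;\big\langle l_j-\mathcal{L}'_j v,\ \mu_j\big\rangle\;=\;\int_{\tau_j}\!\big(l_j-\mathcal{L}'_j v\big)(t,x(t))\,\bar{u}_j(t)\,\diff t\;\leq\;\int_{\tau_j}\!\big(l_j-\mathcal{L}'_j v\big)(t,x(t))\,\diff t,
\]
using $0\le\bar{u}_j\le1$ and the support property above.

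For the implication ``condition $\Rightarrow$ optimality'' the last display is enough: if the unweighted integrals over $\tau_j$ tend to $0$, so do the weighted ones, hence $v_n(T,x_T)-v_n(0,x_0)\to\sum_j\langle l_j,\mu_j\rangle$. Since each $v_n$ is dual–admissible we have $v_n(T,x_T)-v_n(0,x_0)\le d^*=p^*$ by Theorem~\ref{th:noGap}, so the cost of $(x,\omega)$ is $\le p^*$, while feasibility gives $\ge p^*$; thus the arc attains $p^*=p^*_S$ (Theorems~\ref{th:noGapVinter}--\ref{th:noGapModal}) and is globally optimal for \eqref{convex}. This direction is routine.

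The converse is where the work lies. Assuming $(x,\omega)$ optimal, $\sum_j\langle l_j,\mu_j\rangle=p^*=d^*$, so by the absence of a duality gap (Theorem~\ref{th:noGap}) there is a dual–admissible maximizing sequence with $v_n(T,x_T)-v_n(0,x_0)\to d^*$, and the identity above forces each weighted integral $\langle l_j-\mathcal{L}'_j v_n,\mu_j\rangle\to0$. The obstacle is that duality hands us only $\mu_j$-weighted smallness, whereas the statement asks for smallness of the unweighted Lebesgue integral over $\tau_j$; the two differ by the density $\bar{u}_j$, which admits no uniform positive lower bound on its support. To bridge this I would not take an arbitrary maximizing sequence but build $v_n$ by regularizing the convexified value function $V$ of \eqref{convex}: along the optimal arc the relaxed HJB equality forces $(l_j-\mathcal{L}'_j(-V))(t,x(t))=0$ at every $t$ with $\bar{u}_j(t)>0$, and since this set is dense in $\tau_j$ and the integrand is continuous in $t$, the equality propagates to all of $\tau_j$. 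It then remains to mollify $V$ into $C^1$ functions $v_n$ so that $\mathcal{L}'_j v_n\to\mathcal{L}'_j(-V)$ uniformly on a neighbourhood of the trajectory while preserving the global inequalities $l_j-\mathcal{L}'_j v_n\ge0$ on $K$; then $(l_j-\mathcal{L}'_j v_n)(t,x(t))\to0$ uniformly for $t\in\tau_j$, giving the claimed limit. The main difficulty, and the step I expect to be delicate, is exactly this simultaneous control --- enforcing global admissibility of the smooth approximants while driving the Hamiltonian gap to zero uniformly on each active time set --- in the realistic case where $V$ is merely a (possibly discontinuous) generalized subsolution of the HJB system rather than a classical $C^1$ solution.
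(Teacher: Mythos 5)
Your reading of the corollary as a complementary-slackness statement is exactly the paper's: its entire proof is the remark that the corollary ``explicits weak duality'' via the complementarity condition $\langle z_p^*, c-\mathcal{A}' z_d^* \rangle_p = 0$, with Lemma~\ref{th:solAttained} supplying an attained primal optimizer and Theorem~\ref{th:noGap} supplying a dual maximizing sequence $(v_n)$ in place of a possibly nonexistent dual optimizer. Your identity $\sum_{j}\langle l_j-\mathcal{L}'_j v,\mu_j\rangle=\sum_j\langle l_j,\mu_j\rangle-\bigl(v(T,x_T)-v(0,x_0)\bigr)$ with nonnegative summands is that observation made explicit, and your ``if'' direction is correct and complete: the comparison $0\le\langle l_j-\mathcal{L}'_j v_n,\mu_j\rangle\le\int_{\tau_j}(l_j-\mathcal{L}'_j v_n)(t,x(t))\,\diff t$, using $0\le\bar u_j\le 1$ and $\bar u_j=0$ a.e.\ off $\tau_j$, is legitimate and is a step the paper leaves implicit.

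The converse is where your proposal has a genuine unfinished step, and you diagnose it yourself. Duality delivers only the $\mu_j$-weighted smallness $\int_{\tau_j}(l_j-\mathcal{L}'_j v_n)(t,x(t))\,\bar u_j(t)\,\diff t\to 0$; passing to the unweighted integral over $\tau_j$ does not follow, because the slack $l_j-\mathcal{L}'_j v_n$ is nonnegative but not uniformly bounded in $n$ (a dual maximizing sequence may blow up in ${\mathcal C}^1$ norm), while $\bar u_j$ admits no positive lower bound on its support, so splitting $\tau_j$ into $\{\bar u_j\ge\varepsilon\}$ and its complement cannot close the argument. Your proposed repair --- propagating the pointwise HJB equality from $\{\bar u_j>0\}$ to $\tau_j$ and then mollifying the value function while preserving the global inequalities $l_j-\mathcal{L}'_j v_n\ge 0$ on $K$ --- presupposes continuity of $\mathcal{L}'_j(-V)$ along the arc, i.e.\ essentially $V\in{\mathcal C}^1$ near the trajectory, which is exactly what fails for generic value functions; as you concede, this step is not carried out. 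Be aware, though, that the paper does not carry it out either: its two-line proof treats the corollary purely as complementarity, implicitly identifying the stated Lebesgue integral with the $\mu_j$-weighted one (an identification that is exact for piecewise-continuous controls, where $\bar u_j\in\{0,1\}$ a.e.\ on $\tau_j$, but not for genuinely relaxed arcs with overlapping $\tau_j$, a situation the paper itself points out can occur). So your proposal fully proves the ``if'' direction, proves the converse only in the weighted form the paper actually establishes, and the residual gap you isolate is real --- it is a gap in the unweighted strength of the statement rather than in your reading of the duality mechanism.
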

\begin{proof}
This corollary explicits weak duality (via the complementarity condition $\langle z_p^*, c-\mathcal{A}' z_d^* \rangle_p = 0$) if $z_p^*$ and $z_d^*$ are optimal for their respective problems, and exploits Lem.~\ref{th:solAttained} guaranteeing the existence of an optimal $z_p^*$.
\end{proof}

The advantage of such specialized optimality certificates for switched systems over the general case is that the test needs only to be checked on a mode-by-mode basis. This is particularly true if mode-dependent constraints are introduced, see \S\ref{sec:extraconstraints}.

\section{LMI relaxations}
\label{sec:momentRelax}

This section outlines the numerical method to solve \eqref{eq:weakSwitchedAlternative} in practice, by means of primal-dual
moment-SOS LMI relaxations.
This is done by restricting the problem data to be polynomial, so that occupation measures can be equivalently manipulated by their moments for the problem at hand, see \S\ref{sec:momentLP}. Then, this new infinite-dimensional problem is truncated so as to obtain in \S\ref{solvelp} a convex, finite-dimensional relaxation, with dual described in \S\ref{sec:SOS}.
Finally, \S\ref{sec:complexity} shows how the structural properties of \eqref{eq:weakSwitchedAlternative} improves the computational load of each of these relaxations in comparison to unstructured relaxations, as could be derived from \eqref{eq:weak}.

In the remainder of the paper, the standing Assumption~\ref{th:standingAssumptions} is therefore strengthened as follows:
\begin{assumption}
\label{th:polynomialDataAssumption}
All functions are polynomial in their arguments:
\[
   l_j, f_j \in \R[t,x], \quad j=1,\ldots,m.
\]
In addition,
set $K=[0,T]\times X$ is basic semi-algebraic,
viz., it is defined as the conjunction
of finitely many polynomial inequalities
\[
 K = \left\lbrace (t,x) \in \R \times \R^n : \, g_i(t,x) \geq 0, i = 1,\ldots,n_K   \right\rbrace
\]
with $g_i \in \R[t,x]$, $ i = 1,\ldots,n_K$. 
Let $g_0(t,x)$ denote the polynomial identically equal to one. 
In addition, it is assumed that one of the $g_i$ enforces a ball constraint on all variables, which is possible w.l.g. since
$K$ is assumed compact (see \cite[Th.~2.15]{lasserre} for slightly weaker conditions).
\end{assumption}

\subsection{Moment LP}
\label{sec:momentLP}

Given a sequence $y=(y_\alpha)_{\alpha \in \N^n}$, let $L_y:\R[x]\to\R$ be the Riesz linear functional
\[
 f(x)= \sum_\alpha f_\alpha x^\alpha \in\R[x]
  \quad \mapsto \quad
  L_y(f)\,=\,\sum_\alpha f_\alpha y_\alpha. 
\]
Define the {\it localizing matrix} of order $d$ associated with sequence $y$ and
polynomial $g(x) = \sum_\gamma g_\gamma x^\gamma \in \R[x]$ as the real symmetric matrix $M_d(g\,y)$ whose entry $(\alpha,\beta)$ reads
\begin{align}
  \label{eq:locMomMat}
 [M_d(g\,y)]_{\alpha, \beta} & = L_y \left( g(x) \, x^{\alpha+\beta} \right) \\
                    & = \sum_\gamma g_\gamma \, y_{\alpha+\beta+\gamma},
  \quad \forall \alpha,\beta \in \N^n_d.
\end{align}
In the particular case that $g(x)=1$, the localizing matrix is called the {\it moment matrix}.

The construction of the LMI associated with problem \eqref{eq:weakSwitchedAlternative} can now be stated. Its decision variable is the aggregate sequence of moments
$y=(y_1,y_2,\ldots,y_m)$ where each $y_j=(y_{j,\beta})_{\beta \in \N^{n+1}}$ is the moment sequence of measure $\diff\mu_j(t,x)$ for $j=1,2,\ldots,m$, i.e.,
\begin{equation}\label{mom}
y_{j,\beta} = \left\langle v_\beta(t,x), \mu_j \right\rangle
\end{equation}
for the  particular choice of test function 
\begin{equation}\label{eq:betaTestFun}
v_\beta(t,x) := t^{\beta_0} x_1^{\beta_1} x_2^{\beta_2} \cdots x_n^{\beta_n}, \:\:\beta \in \N^{n+1}.
\end{equation}

As each term of the cost of \eqref{eq:weakSwitchedAlternative} is polynomial by assumption, they can be rewritten as
\[
\sum_{j=1}^m \sum_{\beta \in \N^{n+1}} c_{j,\beta} y_{j,\beta} = \sum_{j=1}^m L_{y_j}(l_j).
\]
That is, vector $(c_{j,\beta})_{\beta}$ contains the coefficients of polynomials $l_j$ expressed in the monomial basis.
Similarly, the weak dynamics constraints of \eqref{eq:weakSwitchedAlternative} need only be satisfied for countably many polynomial test functions, since the measures are supported on compact subsets of $\R^{n+1}$.
Hence the weak dynamics defines a linear constraint between moments of the form
\begin{equation}\label{eq:bbeta}
\sum_{j=1}^m \sum_{\beta \in \N^{n+1}} a_{j,\alpha,\beta} y_{j,\beta}  = b_\alpha = v_\alpha(T,x_T) - v_\alpha(0,x_0)
\end{equation}
where coefficients $a_{j,\alpha,\beta}$ can  be deduced by identification from
\begin{equation}\label{eq:momLinConstr}
\sum_{\beta \in \N^{n+1}} a_{j,\alpha,\beta} y_{j,\beta} = L_{y_j}(\mathcal{L}'_j v_\alpha), \:\:\alpha \in \N^{n+1}.
\end{equation}
Finally, the only nonlinear constraints are the convex LMI constraints for measure representativeness, 
i.e., constraints on $y$ such that \eqref{mom} holds for all $j=1,\ldots,m$ and $\beta \in\N^{n+1}$. 
Indeed, it follows from Putinar's theorem \cite[Theorem 3.8]{lasserre} that 
the sequence of moments $y$ has a representing measure defined on
a set satisfying Assumption \ref{th:polynomialDataAssumption} if and only if $M_d(g_i\,y) \succeq 0$ for all $d \in \N$ and for all polynomials $g_i$ defining the set, $i=0,1,\ldots,n_K$, 
and where $A \succeq 0$ stands for matrix $A$ positive semidefinite, i.e., with nonnegative real eigenvalues. 

This leads to the problem:
\begin{equation}\label{eq:momentpb}
\begin{array}{rcll}
p^*_\infty & = & \displaystyle \inf_y &  \displaystyle \sum_{j=1}^m  \sum_{\beta \in \N^{n+1}} c_{j,\beta} y_{j,\beta}\\
&& \mathrm{s.t.} &  \displaystyle \sum_{j=1}^m \sum_{\beta \in \N^{n+1}} a_{j,\alpha,\beta} y_{j,\beta}  = b_\alpha, \:\alpha \in \N^{n+1},\\
&&& M_d(g_i\, y_j) \succeq 0, \: i=0\ldots n_K, \; j = 1 \ldots m,\\
& & & \qquad \qquad \qquad \; d \in \N.
\end{array}
\end{equation}

\begin{theorem}\label{th1}
Measure LP \eqref{eq:weakSwitchedAlternative} and infinite-dimensional LMI problem \eqref{eq:momentpb}
share the same optimum:
\[
    p^* = p^*_\infty.
\]
\end{theorem}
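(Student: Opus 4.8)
The plan is to establish $p^* = p^*_\infty$ by two inequalities, each witnessed by a cost-preserving map between feasible points of the measure LP \eqref{eq:weakSwitchedAlternative} and of the moment LP \eqref{eq:momentpb}.

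For $p^*_\infty \leq p^*$, I would take any admissible $(\mu_1,\ldots,\mu_m)$ for \eqref{eq:weakSwitchedAlternative} and pass to its moment sequences $y_j=(y_{j,\beta})_\beta$ defined by \eqref{mom}. Three verifications are needed. The linear constraints \eqref{eq:bbeta} are precisely the weak dynamics \eqref{eq:weakDynamicsModal} evaluated against the monomial test functions $v_\alpha$ of \eqref{eq:betaTestFun}, which are legitimate since each $v_\alpha \in {\mathcal C}^1(K)$. The localizing constraints hold automatically: this is the elementary direction of the moment problem, because for any $\mu_j \in \PositiveMeasures(K)$ and any coefficient vector $\xi$ one has $\xi^\top M_d(g_i\,y_j)\,\xi = \langle g_i\, p_\xi^2, \mu_j \rangle \geq 0$, where $p_\xi$ denotes the polynomial with coefficients $\xi$ and $g_i \geq 0$ on $K$. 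The costs agree since $\sum_j L_{y_j}(l_j) = \sum_j \langle l_j, \mu_j \rangle$. Thus $y$ is admissible for \eqref{eq:momentpb} with identical cost, giving $p^*_\infty \leq p^*$.

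For the reverse inequality $p^* \leq p^*_\infty$, I would start from any admissible $y=(y_1,\ldots,y_m)$ for \eqref{eq:momentpb}. Here the key ingredient is Putinar's theorem, as quoted just before \eqref{eq:momentpb}: since $M_d(g_i\,y_j) \succeq 0$ for every order $d$ and every $i=0,\ldots,n_K$, and since the ball constraint of Assumption \ref{th:polynomialDataAssumption} makes the underlying quadratic module Archimedean, each $y_j$ admits a representing measure $\mu_j \in \PositiveMeasures(K)$. The cost again matches by the representing property, so it only remains to check that $(\mu_1,\ldots,\mu_m)$ is feasible for \eqref{eq:weakSwitchedAlternative}, i.e. that \eqref{eq:weakDynamicsModal} holds for \emph{all} $v \in {\mathcal C}^1(K)$, whereas \eqref{eq:bbeta} encodes it only for the polynomials $v_\alpha$.

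The main obstacle is exactly this density step. I would note that both sides of \eqref{eq:weakDynamicsModal}, seen as functionals of $v$, are continuous for the ${\mathcal C}^1(K)$-norm: the right side through point evaluations, and the left side because $v \mapsto \mathcal{L}'_j v = \frac{\partial v}{\partial t} + \frac{\partial v}{\partial x} \cdot f_j$ maps ${\mathcal C}^1(K)$ continuously into ${\mathcal C}(K)$ (the coefficients $f_j$ being bounded on the compact $K$) while each $\mu_j$ has finite mass. Since polynomials are dense in ${\mathcal C}^1(K)$ for its norm---a standard consequence of the Stone--Weierstrass theorem applied simultaneously to a function and its first partial derivatives on the compact $K$---the identity \eqref{eq:weakDynamicsModal}, valid on the dense polynomial subset, extends to all of ${\mathcal C}^1(K)$. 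This establishes feasibility of $(\mu_1,\ldots,\mu_m)$ with cost $p^*_\infty$, hence $p^* \leq p^*_\infty$, and together with the first inequality yields $p^* = p^*_\infty$.
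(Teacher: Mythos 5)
Your proposal is correct and takes essentially the same route as the paper, which does not spell out a proof but justifies the theorem in the surrounding text by exactly your two ingredients: the elementary direction (positivity of localizing forms for representing measures, and \eqref{eq:bbeta} as the weak dynamics tested on monomials) and the converse via Putinar's theorem, made applicable by the ball constraint of Assumption \ref{th:polynomialDataAssumption}, together with the claim that polynomial test functions suffice because the measures have compact support. Your write-up merely makes explicit the ${\mathcal C}^1(K)$-density step the paper leaves implicit (note that simultaneous approximation of a function and its first derivatives is more safely attributed to convolution with polynomial kernels or multivariate Bernstein approximation than to Stone--Weierstrass alone, which only yields uniform approximation of the function itself).
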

For the rest of the paper, we will therefore use $p^*$ to denote the cost of measure LP \eqref{eq:weakSwitchedAlternative} or LMI problem \eqref{eq:momentpb} indifferently.

\subsection{Moment LMI hierarchy}
\label{solvelp}

The final step to reach a tractable problem is relatively obvious: infinite-dimensional LMI problem \eqref{eq:momentpb} is truncated to its first few moments.

To streamline exposition, first notice in LMI \eqref{eq:momentpb} that $M_{d+1}( \cdot) \succeq 0$ implies $M_{d}( \cdot) \succeq 0$, such that when truncated, only the constraints of highest order must be taken into account.
Now, let $d_0 \in \N$ be the smallest integer such that all Lagrangians, dynamics and constraint monomials belong to $\N^{n+1}_{2 d_0}$.  This is the degree of the so-called \emph{first relaxation}. For any relaxation order $d \geq d_0$, the decision variable is now the vector $( y_{j,\alpha})_\alpha$ with $\alpha \in \N_{2 d}^{n+1}$, made of the first $ \left( \begin{smallmatrix} n+2d+1 \\ n+1 \end{smallmatrix} \right)$ moments of each measure $\mu_j$.  Then, define  the index set
\[
  \bar{\N}^{n+1}_{2d} := \left\lbrace \alpha \in \N^{n+1}: \, \deg \mathcal{L}'_j v_\alpha \leq 2 d, j = 1,\ldots, m \right\rbrace,
\]

viz., the set of monomials for which test functions of the form \eqref{eq:betaTestFun} lead to linear constraints of appropriate degree. By assumption, this set is finite and not empty -- the constant monomial always being a member.

Then, the LMI relaxation of order $d$ is given by
\begin{equation}\label{eq:momentrelax}
\begin{array}{rcll}
p^*_d & = &  \inf\limits_y &  \displaystyle \sum_{j=1}^m  \sum_{\beta \in \N^{n+1}_{2d}} c_{j,\beta} y_{j,\beta} \\
&& \text{s.t.} &  \displaystyle \sum_{j=1}^m \sum_{\beta \in \N^{n+1}_{2d}} a_{j,\alpha,\beta} y_{j,\beta}  = b_\alpha, \:\alpha \in \bar{\N}^{n+1}_{2d} \\
&&& M_d(g_i\, y_j) \succeq 0, \: i=0,\ldots , n_K, \; j = 1, \ldots, m.
\end{array}
\end{equation}

Notice that for each relaxation, we get a standard LMI problem that can be solved numerically by off-the-shelf software. In addition, the relaxations converge asymptotically to the cost of the moment LP:

\begin{theorem}\label{th2}
For increasing relaxation orders $d$,
the hierarchy of finite-dimensional LMI relaxations (\ref{eq:momentrelax}) yields an asymptotically converging
monotonically non-decreasing sequence of lower bounds on the values of measure LP
\eqref{eq:weakSwitchedAlternative} and infinite-dimensional LMI problem \eqref{eq:momentpb} , i.e.
\[
p^*_{d_0} \leq p^*_{d_0+1} \leq \cdots \leq p^*_{\infty}  = p^*.
\]
\end{theorem}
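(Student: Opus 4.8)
The plan is to split the statement into the monotone lower-bound chain $p^*_{d_0}\leq p^*_{d_0+1}\leq\cdots\leq p^*_\infty$ and the limiting equality $\lim_{d\to\infty}p^*_d=p^*_\infty$. The first part is a soft consequence of the nesting of the relaxations. First I would observe that any $y$ feasible for the order-$(d{+}1)$ relaxation \eqref{eq:momentrelax} restricts, by discarding the moments of degree exceeding $2d$, to a point feasible for the order-$d$ relaxation: the localizing-matrix inequalities $M_d(g_i\,y_j)\succeq 0$ are principal submatrices of (equivalently, implied by) $M_{d+1}(g_i\,y_j)\succeq 0$, exactly the remark made just before \eqref{eq:momentrelax}; the linear constraints indexed by $\bar{\N}^{n+1}_{2d}\subseteq\bar{\N}^{n+1}_{2(d+1)}$ are retained; and the cost depends only on moments of degree at most $2d_0\leq 2d$, hence is unchanged. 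Since this restriction produces an order-$d$ feasible point of equal cost, the order-$d$ infimum can only be smaller, giving $p^*_d\leq p^*_{d+1}$. Applying the same truncation to any point feasible for the infinite-dimensional problem \eqref{eq:momentpb} yields $p^*_d\leq p^*_\infty$ for every $d$, so the whole hierarchy consists of lower bounds; Theorem~\ref{th1} then identifies $p^*_\infty=p^*$, settling every relation except the limit.

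For convergence, the sequence $(p^*_d)_{d\geq d_0}$ is monotone non-decreasing and bounded above by $p^*_\infty$, hence converges to some $\bar p\leq p^*_\infty$, and it remains to prove $\bar p\geq p^*_\infty$. The strategy is the standard extraction argument for moment hierarchies. For each $d$ I would pick a point $y^{(d)}$ feasible for \eqref{eq:momentrelax} whose cost is within $1/d$ of the infimum $p^*_d$. The crucial ingredient is a uniform bound on the moments: because Assumption~\ref{th:polynomialDataAssumption} puts a ball constraint among the $g_i$, the inequality $M_d(g_i\,y_j)\succeq 0$ for that particular $g_i$, combined with positivity of the moment matrix, forces each $|y^{(d)}_{j,\alpha}|$ to be bounded by a constant depending only on $|\alpha|$ and the radius $R$ of the ball — the diagonal entries are controlled by the ball constraint and the off-diagonal entries by the Cauchy--Schwarz inequality implicit in positive semidefiniteness, while the total mass $\sum_{j}\langle 1,\mu_j\rangle=T$ is fixed by the test function $v(t,x)=t$ as in Lemma~\ref{th:solAttained}.

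With these uniform bounds, a diagonal (Tychonoff) argument produces a subsequence along which every coordinate $y^{(d)}_{j,\alpha}$ converges to some $y^*_{j,\alpha}$. I would then check that $y^*=(y^*_1,\ldots,y^*_m)$ is feasible for \eqref{eq:momentpb}: each linear constraint \eqref{eq:bbeta} survives the limit because, for fixed $\alpha$, it is a fixed finite linear relation eventually enforced; and each constraint $M_d(g_i\,y^*_j)\succeq 0$ holds because it is the entrywise limit of positive semidefinite matrices and that cone is closed. Putinar's theorem \cite[Theorem 3.8]{lasserre}, applicable precisely because of the ball constraint of Assumption~\ref{th:polynomialDataAssumption}, then guarantees that each $y^*_j$ has a representing measure $\mu^*_j\in\PositiveMeasures(K)$, so $y^*$ is admissible for \eqref{eq:momentpb}. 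Finally, since the cost is a finite linear combination of low-degree moments, coordinatewise convergence gives $\sum_{j}L_{y^*_j}(l_j)=\lim_d\sum_{j}L_{y^{(d)}_j}(l_j)=\bar p$, whence $p^*_\infty\leq\bar p$. Together with $\bar p\leq p^*_\infty$ this forces $\bar p=p^*_\infty=p^*$.

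I expect the main obstacle to be the uniform moment bound together with the closedness argument used to pass the semidefinite constraints to the limit; everything downstream — the diagonal extraction, the passage to the limit in the linear and semidefinite constraints, and the invocation of Putinar — is routine once the truncated moment sequences are known to lie coordinatewise in a fixed compact set. The ball constraint of Assumption~\ref{th:polynomialDataAssumption} is exactly what makes this bound available, so the delicate point is to render the dependence of the bound on $|\alpha|$ explicit and uniform in $d$.
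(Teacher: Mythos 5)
Your proof is correct and follows essentially the same route as the paper: the paper's (much terser) proof likewise gets monotonicity from the nesting of the truncated relaxations and asymptotic convergence from Putinar's theorem \cite[Theorem 3.8]{lasserre}, exactly the ingredient your extraction argument spells out. Your version merely fills in the standard details the paper delegates to the citation --- uniform moment bounds from the ball constraint in Assumption \ref{th:polynomialDataAssumption} and the mass constraint, diagonal extraction, closedness of the PSD cone, and Putinar to recover representing measures --- and these details are sound.
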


\begin{proof}
By construction, observe that $j > i$ implies $p^*_{d_0+j} \geq p^*_{d_0+i} $, viz.,
the sequence $p^*_d$ is monotonically non-decreasing. Asymptotic convergence to $p^*$
follows from \cite[Theorem 3.8]{lasserre} as in the proof of Theorem \ref{th1}.
\end{proof}

Therefore, by solving the truncated problem for ever greater relaxation orders, we obtain a monotonically non-decreasing sequence
of lower bounds to the true optimal cost.

\subsection{SOS LMI hierarchy}
\label{sec:SOS}

As for the measure LP of \S\ref{sec:measureLP}, the moment LMI relaxations detailed in the previous section possess a conic dual. In this section, we show that this dual problem can be interpreted as a polynomial SOS strengthening of the dual outlined in \S\ref{sec:duality}.
The exact form of the dual problem is an essential aspect of the numerical method, since it will be solved implicitly whenever primal-dual interior-point algorithms are used for solving the moment LMI hierarchy of \S\ref{solvelp}.

Let ${\mathbb S}^n$ be the space of symmetric $n \times n$ real matrices. One can show (see, e.g., \cite[\S{}C]{lasserre}) that, for $A,B \in {\mathbb S}^n$, $\langle A, B \rangle := \mathrm{trace} \, AB$ is a duality bracket ${\mathbb S}^n \times {\mathbb S}^n \mapsto \R$, and that $\left\lbrace A \in {\mathbb S}^n: \, A \succeq 0 \right\rbrace$ defines a convex cone of ${\mathbb S}^n$.
In problem \eqref{eq:momentrelax}, let us define the matrices $A_{i,\beta} \in {\mathbb S}^{\left( \begin{smallmatrix} n+d+1 \\ n+1 \end{smallmatrix} \right)}$ satisfying the identity
\[
M_d(g_i\:y) = \sum_\beta A_{i,\beta} y_{\beta}
\]
for every sequence $(y_{\beta})_{\beta}$ and $i=0,1,\ldots,n_K$.

\begin{proposition}
The conic dual of moment LMI problem \eqref{eq:momentrelax} is given by the SOS LMI problem
\begin{equation}\label{eq:dualmomentrelax}
\begin{array}{ll}
\sup\limits_{z, Z} &  \displaystyle \sum_{\alpha \in \bar{\N}^{n+1}_{2d}} b_{\alpha} z_{\alpha} \\
\text{s.t.} &  \displaystyle \sum_{\alpha \in \bar{\N}^{n+1}_{2d}} a_{j,\alpha,\beta} z_\alpha + \sum_{i=0}^{n_K} \langle A_{i,\beta}, Z_{i,j} \rangle =  c_{j,\beta}, \: \beta \in \N^{n+1}_{2 d},\\
& Z_{i,j} \succeq 0,  \quad i = 0, 1,\ldots , n_K, \; j = 1, \ldots , m. \\
\end{array}
\end{equation}

\end{proposition}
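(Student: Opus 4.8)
The plan is to derive the dual program by applying the standard conic duality recipe to the finite-dimensional primal LMI \eqref{eq:momentrelax}, which is the same mechanical procedure already invoked for the infinite-dimensional dual in Lemma~2 but now carried out in a fully finite-dimensional, explicit setting where no closure or weak-$*$ subtleties arise. First I would recast \eqref{eq:momentrelax} in standard conic form: the primal decision variable is the truncated moment vector $y=(y_{j,\beta})$, the objective is the linear functional $\sum_{j}\sum_{\beta}c_{j,\beta}y_{j,\beta}$, the affine equality constraints are the weak-dynamics relations indexed by $\alpha\in\bar{\N}^{n+1}_{2d}$, and the conic constraints are the positive-semidefiniteness requirements $M_d(g_i\,y_j)\succeq 0$. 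The cone here is a product of positive-semidefinite cones $\prod_{i,j}\{Z\in\mathbb{S}^{N}:Z\succeq 0\}$, which is self-dual under the trace bracket $\langle A,B\rangle=\tr AB$ introduced just before the statement.

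Next I would introduce the dual variables: a real multiplier $z_\alpha$ for each equality constraint $\alpha\in\bar{\N}^{n+1}_{2d}$, and a symmetric positive-semidefinite matrix $Z_{i,j}\succeq 0$ for each semidefinite block $M_d(g_i\,y_j)$. The Lagrangian is then formed by pairing these multipliers against the constraints, using the identity $M_d(g_i\,y_j)=\sum_\beta A_{i,\beta}y_{j,\beta}$ to express the coupling between the moment variables and the matrices $A_{i,\beta}$ defined above the statement. The crux is to collect the coefficient of each free primal variable $y_{j,\beta}$ in the Lagrangian: the term from the objective contributes $c_{j,\beta}$, the equality constraints contribute $\sum_{\alpha}a_{j,\alpha,\beta}z_\alpha$, and each semidefinite block contributes $\langle A_{i,\beta},Z_{i,j}\rangle$ via the trace pairing. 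Since the $y_{j,\beta}$ are unrestricted in sign, stationarity forces the coefficient of each to vanish, which yields exactly the dual equality constraint $\sum_\alpha a_{j,\alpha,\beta}z_\alpha+\sum_{i=0}^{n_K}\langle A_{i,\beta},Z_{i,j}\rangle=c_{j,\beta}$, while the remaining term $\sum_\alpha b_\alpha z_\alpha$ from the right-hand side becomes the dual objective.

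The one step requiring genuine care rather than pure mechanics is the correct bookkeeping of index ranges and sign conventions, ensuring that the degree truncation is consistent: the multipliers $z_\alpha$ must range over $\bar{\N}^{n+1}_{2d}$ so that the test polynomial $v=\sum_\alpha z_\alpha v_\alpha$ keeps $\mathcal{L}'_j v$ within degree $2d$, matching the block sizes of $M_d$, while the stationarity conditions must be written for every $\beta\in\N^{n+1}_{2d}$. This alignment is precisely what guarantees that summing the dual constraints weighted against $y$ reproduces the primal objective, confirming weak duality. I would verify this consistency by checking that the dual, upon identifying $v=\sum_\alpha z_\alpha v_\alpha$, recovers a sum-of-squares relaxation of the HJB inequality $l_j-\mathcal{L}'_j v\ge 0$ from \eqref{eq:dualExplicit}: each semidefinite multiplier $Z_{i,j}$ encodes an SOS multiplier against the constraint polynomial $g_i$, so the dual equality is the coefficient-matching condition for the Putinar certificate $l_j-\mathcal{L}'_j v=\sum_{i=0}^{n_K}g_i\,s_{i,j}$ with $s_{i,j}$ SOS. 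Identifying the dual in this form both confirms the computation and delivers the promised interpretation as an SOS strengthening of the dual of \S\ref{sec:duality}.
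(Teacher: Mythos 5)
Your derivation is correct and takes essentially the same route as the paper: the paper's proof simply observes that \eqref{eq:momentrelax} is an instance of the abstract conic pair \eqref{eq:primal}--\eqref{dual} (splitting the equalities into two inequalities and using the trace pairing for the semidefinite blocks) and leaves the bookkeeping implicit, which is precisely the Lagrangian coefficient-matching you carry out explicitly, with the correct index ranges $\alpha\in\bar{\N}^{n+1}_{2d}$ and $\beta\in\N^{n+1}_{2d}$. Your closing identification of the dual as a Putinar/SOS certificate is not needed for this proposition --- it is the content of the paper's subsequent Proposition~\ref{th:polyStrength} --- but it serves as a sound consistency check.
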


\begin{proof}
Replacing the equality constraints in \eqref{eq:momentrelax} as two inequalities, it is easy to see that the moment relaxation can be written as an instance of LP \eqref{dual}, whose dual is given symbolically by \eqref{eq:primal}. Working out the details leads to the desired result, using for semi-definite constraints the duality bracket as explained earlier.
\end{proof}

The relationship between \eqref{eq:dualmomentrelax} and \eqref{eq:dualExplicit} might not be obvious at a first glance. Denote by $\Sigma[z]$ the subset of $\R[z]$ that can be expressed as a finite sum of squares of polynomials. Then a standard interpretation (see, e.g., \cite{lasserre}) of 
\eqref{eq:dualmomentrelax} in terms of such objects is given by the next proposition.

\begin{proposition} \label{th:polyStrength}
LMI problem  \eqref{eq:dualmomentrelax} can be stated as the following polynomial SOS
strengthening of problem \eqref{eq:dualExplicit}:
\begin{equation}\label{eq:SOSstrength}
\begin{array}{ll}
\sup &  v(T,x_T) - v(0,x_0) \\
\text{s.t.} & \displaystyle l_j - \mathcal{L}'_j v = \sum_{i=0}^{n_K} g_i s_{i,j}, \quad j = 1, \ldots, m\\
\end{array}
\end{equation}
where the maximization is w.r.t. the vector of coefficients $z$ of polynomial
$v(t,x) = \sum_{\alpha \in \bar{\N}^{n+1}_{2d}} z_\alpha v_\alpha(t,x)$
and the vectors of coefficients of polynomials
$s_{i,j} \in \Sigma [t,x], \; \deg g_i \, s_{i,j} \leq 2d$.
\end{proposition}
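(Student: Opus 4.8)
The plan is to read the matrix/coefficient data of LMI \eqref{eq:dualmomentrelax} as a purely polynomial statement, so that \eqref{eq:dualmomentrelax} and \eqref{eq:SOSstrength} become literally the same problem written in two bases. First I would fix the correspondence on the decision variables: identify the vector $z$ with the coefficients of $v$ through $v(t,x) = \sum_{\alpha \in \bar{\N}^{n+1}_{2d}} z_\alpha v_\alpha(t,x)$, exactly as stated in the proposition. Then the objective is immediate, since by the definition $b_\alpha = v_\alpha(T,x_T) - v_\alpha(0,x_0)$ in \eqref{eq:bbeta} and linearity, $\sum_{\alpha} b_\alpha z_\alpha = v(T,x_T) - v(0,x_0)$, so the two objective functions coincide.

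Next I would handle the semidefinite variables via the standard Gram-matrix dictionary (see \cite{lasserre}). To each $Z_{i,j}$ I associate the polynomial $s_{i,j}(t,x) := v_d(t,x)^\top Z_{i,j}\, v_d(t,x)$, where $v_d = (v_\gamma)_{\gamma \in \N^{n+1}_d}$ is the vector of monomials of degree at most $d$. Because monomials multiply additively, $v_\gamma v_{\gamma'} = v_{\gamma+\gamma'}$, so $s_{i,j} = \sum_{\gamma,\gamma'} [Z_{i,j}]_{\gamma\gamma'}\, v_{\gamma+\gamma'}$. The key cone equivalence is that the map $Z_{i,j} \mapsto s_{i,j}$ sends the cone $\{Z_{i,j} \succeq 0\}$ onto $\Sigma[t,x]$ truncated to degree $\le 2d$; hence the constraints $Z_{i,j}\succeq 0$ are exactly the conditions $s_{i,j} \in \Sigma[t,x]$ with $\deg g_i\, s_{i,j} \le 2d$ appearing in \eqref{eq:SOSstrength}.

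The heart of the argument is then to show that the affine equality constraint in \eqref{eq:dualmomentrelax} is the coefficient-wise transcription of the polynomial identity $l_j - \mathcal{L}'_j v = \sum_{i=0}^{n_K} g_i s_{i,j}$. Here I would use two bookkeeping facts. By \eqref{eq:momLinConstr} the numbers $a_{j,\alpha,\beta}$ are the monomial coefficients of $\mathcal{L}'_j v_\alpha$, so $\sum_{\alpha} a_{j,\alpha,\beta} z_\alpha$ is precisely the coefficient of $v_\beta$ in $\mathcal{L}'_j v$. On the multiplier side, the definition of $A_{i,\beta}$ through $M_d(g_i\,y) = \sum_\beta A_{i,\beta} y_\beta$ together with $[M_d(g_i\,y)]_{\gamma\gamma'} = L_y(g_i\, x^{\gamma+\gamma'})$ in \eqref{eq:locMomMat} yields $[A_{i,\beta}]_{\gamma\gamma'} = (g_i)_{\beta-\gamma-\gamma'}$, the coefficient of $x^{\beta-\gamma-\gamma'}$ in $g_i$; consequently $\sum_{i} \langle A_{i,\beta}, Z_{i,j}\rangle = \sum_i \sum_{\gamma,\gamma'} (g_i)_{\beta-\gamma-\gamma'}[Z_{i,j}]_{\gamma\gamma'}$ is exactly the coefficient of $v_\beta$ in $\sum_i g_i s_{i,j}$. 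Since $c_{j,\beta}$ is the $v_\beta$-coefficient of $l_j$, the $\beta$-th equation of \eqref{eq:dualmomentrelax} states that the $v_\beta$-coefficients of $\mathcal{L}'_j v + \sum_i g_i s_{i,j}$ and of $l_j$ agree. Letting $\beta$ range over $\N^{n+1}_{2d}$ then upgrades this family of scalar equalities to the full polynomial identity of \eqref{eq:SOSstrength}.

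The step I expect to be the main obstacle is the degree bookkeeping that makes this upgrade legitimate: coefficient matching over the finite index set $\N^{n+1}_{2d}$ is equivalent to the polynomial identity only because every polynomial involved has degree at most $2d$. I would verify this explicitly, using that $\deg l_j \le 2d_0 \le 2d$, that $\deg \mathcal{L}'_j v_\alpha \le 2d$ is guaranteed by restricting $\alpha$ to $\bar{\N}^{n+1}_{2d}$, and that $\deg g_i\, s_{i,j}\le 2d$ is imposed directly; one must also confirm that no constraint row for $\beta$ outside $\N^{n+1}_{2d}$ is needed, which follows from these same bounds. The surjectivity half of the Gram-matrix/SOS correspondence (every SOS of degree $\le 2d$ arises from some $Z \succeq 0$) is a standard fact I would simply cite from \cite{lasserre}, the only genuinely computational piece being the bilinear identity $\sum_i \langle A_{i,\beta}, Z_{i,j}\rangle = [\sum_i g_i s_{i,j}]_{v_\beta}$ sketched above.
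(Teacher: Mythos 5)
Your proposal is correct and takes essentially the same route as the paper's proof: identify the objective via $b_\alpha = v_\alpha(T,x_T)-v_\alpha(0,x_0)$, read the affine constraints of \eqref{eq:dualmomentrelax} as the coefficient-wise transcription of the polynomial identity $l_j - \mathcal{L}'_j v = \sum_i g_i s_{i,j}$ using the definitions of $a_{j,\alpha,\beta}$ and $A_{i,\beta}$, and convert the semidefinite blocks to SOS multipliers through the standard Gram-matrix correspondence cited from \cite{lasserre}. The only differences are cosmetic -- the paper assembles the identity by multiplying the $\beta$-th scalar constraint by $v_\beta$ and summing, which is your coefficient matching read in the opposite direction, and your Gram vector for the localizing blocks should strictly be truncated to degree $d-\lceil (\deg g_i)/2\rceil$ to honor $\deg g_i\, s_{i,j} \le 2d$, a bookkeeping point on which the paper's own notation is equally loose.
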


\begin{proof}
By \eqref{eq:bbeta}, the cost of \eqref{eq:SOSstrength} is equivalent to \eqref{eq:dualmomentrelax}. Then multiply each scalar constraint (indexed by $\alpha$) by $v_\alpha$ as given by \eqref{eq:betaTestFun}, and sum them up. By definition, $\sum_\alpha c_{j,\alpha} v_\alpha = l_j$, and similarly, $\sum_{\beta} a_{j,\alpha,\beta} z_\alpha v_\alpha = \mathcal{L}'_j v_\alpha$. The conversion from the semi-definite terms to SOS exploits their well-known relationship (e.g., \cite[\S{}4.2]{lasserre}) to obtain the desired result, by definition \eqref{eq:locMomMat} of the localizing matrices.
\end{proof}

Prop. \ref{th:polyStrength} specifies in which sense ``polynomial SOS strengthenings'' must be understood: positivity constraints of \eqref{eq:dualExplicit} are enforced by SOS certificates, and the decision variable of \eqref{eq:dualExplicit} is now limited to polynomials of appropriate degrees.

Finally, the following result states that no numerical troubles
are expected when using classical interior-point algorithms
on the primal-dual LMI pair (\ref{eq:momentrelax}-\ref{eq:dualmomentrelax}).

\begin{proposition}
The infimum in primal LMI problem (\ref{eq:momentrelax}) is equal to
the supremum in dual LMI problem (\ref{eq:dualmomentrelax}),
i.e., there is no duality gap.
\end{proposition}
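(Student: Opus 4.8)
The plan is to recognise \eqref{eq:momentrelax} and \eqref{eq:dualmomentrelax} as a genuine primal--dual pair of \emph{finite-dimensional} semidefinite programs, exactly as was set up in the proof of the preceding proposition by splitting each equality into two inequalities and matching the pattern \eqref{dual}--\eqref{eq:primal}. For any such conic pair over the self-dual positive semidefinite cone, weak duality (the primal infimum dominating the dual supremum) is immediate from the definition of the adjoint, so the whole content of the proposition is the \emph{reverse} inequality. For semidefinite programs this is not automatic, and the standard remedy is to verify a Slater-type constraint qualification on one of the two problems and then invoke a classical strong-duality theorem (see \cite{Barvinok2002convexity}, or the SDP duality results quoted in \cite[\S C]{lasserre}): strict feasibility of the primal yields both the absence of a gap and attainment of the dual supremum.

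The natural target is therefore strict feasibility of the moment problem \eqref{eq:momentrelax}, namely a truncated sequence $y=(y_1,\ldots,y_m)$ satisfying the finitely many linear constraints $\sum_{j}\sum_{\beta} a_{j,\alpha,\beta}\,y_{j,\beta}=b_\alpha$ while making every localizing matrix strictly positive definite, $M_d(g_i\,y_j)\succ 0$ for all $i,j$. The positive-definiteness half is the easy one: if each modal measure $\mu_j$ is taken with a strictly positive density on $K=[0,T]\times X$, which has nonempty interior thanks to the ball constraint of Assumption \ref{th:polynomialDataAssumption}, then every localizing matrix attached to a $g_i$ that is positive on the interior of $K$ is automatically positive definite, this being a standard property of moment matrices of measures with full-dimensional support. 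Feasibility of the affine part is guaranteed as soon as \eqref{eq:weakSwitchedAlternative} admits an admissible vector, which by Lemma \ref{th:solAttained} is the operating hypothesis.

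The hard part will be reconciling full-support measures with the dynamics/boundary equalities, since a convex combination of a strictly definite but flux-mismatched candidate with a known feasible point does not preserve the right-hand side $b$. I would start from a feasible $y^{0}$ and add a small multiple of a \emph{flux-neutral} full-support perturbation, i.e. a vector of positive measures $(\nu_1,\ldots,\nu_m)$ with $\sum_{j}\langle \mathcal{L}'_j v,\nu_j\rangle=0$ for every test polynomial of degree at most $2d$; such a perturbation leaves $b$ untouched while fattening the support to full dimension and thereby restoring strict definiteness. Producing these measures amounts to solving, up to the truncation degree, a stationary Liouville-type annihilation condition for the transport operators $\mathcal{L}'_j$, and this is the step that genuinely requires care. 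A frequently cleaner alternative is to establish Slater on the dual \eqref{eq:SOSstrength} instead: the affine choice $v(t,x)=-c\,t$ gives $l_j-\mathcal{L}'_j v = l_j + c$, which is strictly positive on the compact set $K$ once $c$ is large, so by Putinar's representation (the Archimedean condition being supplied by Assumption \ref{th:polynomialDataAssumption}) it lies in the interior of the quadratic module and admits strictly positive-definite Gram matrices $Z_{i,j}$ — the only caveat, and the reason I would favour the primal route at a fixed order, being that the relaxation order $d$ must be large enough for this representation to exist at the prescribed degree. Either constraint qualification, fed into the standard finite-dimensional SDP strong-duality theorem, closes the gap and gives $p^*_d=d^*_d$.
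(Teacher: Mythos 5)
There is a genuine gap: your entire argument is routed through a Slater-type constraint qualification, and neither of the two versions you propose can be secured. On the primal side, the flux-neutral full-support perturbation $(\nu_1,\ldots,\nu_m)$ that you correctly identify as the crux is not constructed, and in fact no such perturbation need exist, because the linear (Liouville) equalities in \eqref{eq:momentrelax} can \emph{force} singularity of the moment matrices. Concretely, take $m=1$, $f_1\equiv 0$, $X=[-1,1]$, $x_0=x_T=0$, $T=1$: the test monomials $v=t\,x^{2k}$ give $\mathcal{L}'_1 v = x^{2k}$ and $b=v(T,x_T)-v(0,x_0)=0$, so every feasible truncated sequence satisfies $L_y(x^{2k})=0$ for all $2k+1\leq 2d$; these are diagonal entries of $M_d(y)$, and a positive semidefinite matrix with a zero diagonal entry is singular. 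Hence primal strict feasibility fails for perfectly well-posed instances, and no perturbation argument can rescue it. On the dual side, the caveat you flag yourself is fatal rather than cosmetic: Putinar's theorem gives an SOS representation of $l_j+c$ with multipliers of \emph{some} uncontrolled degree, so at a fixed relaxation order $d$ (in particular $d=d_0$) you cannot guarantee strict feasibility of \eqref{eq:dualmomentrelax}, whereas the proposition asserts a zero gap at \emph{every} order of the hierarchy. Moreover, even when a degree-$2d$ representation exists, it need not admit positive definite Gram matrices $Z_{i,j}$, which is what dual Slater actually requires.

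The paper closes the gap by a constraint-qualification-free route that you should adopt instead: by Assumption \ref{th:polynomialDataAssumption} one of the $g_i$ enforces a ball constraint, and the corresponding localizing constraint $M_d(g_i\,y_j)\succeq 0$, combined with the mass bound $\sum_j y_{j,0}=T$ coming from the test function $v=t$, bounds the entire truncated moment vector $y$ (the diagonal entries of the localizing matrix recursively dominate all even moments, and these dominate the rest). The primal feasible set of \eqref{eq:momentrelax} is therefore nonempty and \emph{compact}, and for a finite-dimensional SDP this already implies zero duality gap, by the argument of Theorem 4 in Appendix D of \cite{roa}. Note that this boundedness mechanism sidesteps precisely the obstruction exhibited above: it needs no interior point on either side, which is essential since, as the example shows, no interior point may exist.
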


\begin{proof}
By Assumption \ref{th:polynomialDataAssumption}, one of the polynomials $g_i$ in the description of set $K$
enforces a ball constraint. The corresponding localizing constraint $M_d(g_i\, y_j) \succeq 0$ then implies
that the vector of moments $y$ is bounded in LMI problem (\ref{eq:momentrelax}). 
Then to prove the absence of duality gap, we use
the same arguments as in the proof of Theorem 4 in Appendix D of \cite{roa}.
\end{proof}

\subsection{ Computational  complexity}
\label{sec:complexity}
This section details rough estimates on the numerical gains expected by employing modal occupation measures in LMI relaxations as opposed to the more generic method developed in \cite{sicon}.

Following the standing assumptions, the computational complexity of solving an LMI relaxation of order $d$ with 
$\bar{p}$ occupation measures supported on a space of dimension $n+m+1$  (time, states, controls) is dominated by $\bar{p}$ LMI constraints of size $\bar{m}$ (the moment matrices of the occupation measures) in $\bar{n}$ variables (the moment vectors of the occupation measures), with
\[
\bar{n} = \begin{pmatrix}
n+m+2d+1 \\ n+m+1
\end{pmatrix}, \quad
\bar{m} = \begin{pmatrix}
n+m+d+1 \\n+m+1
\end{pmatrix}.
\]
A standard primal-dual interior-point algorithm to solve this LMI at given relative accuracy $\epsilon > 0$ (duality gap threshold) requires a number of iterations (Newton steps) growing as $\mathcal{O}(\bar{p}^{\frac{1}{2}} \bar{m}^{\frac{1}{2}}) \log \epsilon $, see, e.g., \cite[Section 6.6.3]{BenTal2001lecture}. In the real model of computation (for which each addition, subtraction, multiplication, division of real numbers has unit cost), each Newton iteration requires no more than $\mathcal{O}(\bar{n}^3)+\mathcal{O}(\bar{p}\bar{n}^2\bar{m}^2)+\mathcal{O}(\bar{p}\bar{n}\bar{m}^3)$ operations. When solving a hierarchy of simple LMI relaxations as described above, the accuracy $\epsilon$ is fixed, the number of states $n$ and controls $m$ are fixed the number $\bar{p}$ of measures is fixed, and the relaxation order $d$ varies, such that $\mathcal{O}(\bar{n}) = \mathcal{O}(\bar{m}) = \mathcal{O}(d^{4(n+m+1)})$ and the dominating term in the complexity estimate grows in $\mathcal{O}(\bar{p}^{\frac{3}{2}}d^{\frac{9}{2}(n+m+1)})$, which clearly
shows a strong dependence on the number of state and control variables.

Therefore, an LMI relaxation of an optimal switching control  problem with $n$ states and $m$ controls will be solved in $\mathcal{O}(d^{\frac{9}{2} \, (n+m+1)})$ operations when solved with the general formulation of \cite{sicon} and $\mathcal{O}( m^{\frac{3}{2}} \,d^{\frac{9}{2} \, (n+1)})$ when solved with structured formulation \eqref{eq:weakSwitchedAlternative}. That is a $\frac{n+1}{n+m+1}$ reduction of the polynomial rate at which the CPU time grows with relaxation orders. This back-of-the-envelope estimation is slightly conservative in practice, as shown in \S\ref{sec:doubleint}. Note that the same analysis as performed with the reported computation times of \cite{Bonnard2014NMR} or \cite{Sager2014electricCar} reveals a similar underestimation of actual computation gains by applying the method described in this paper. As seen in these examples, those gains are substantial and justify the practical use of the modal occupation measures for switched systems. 

\section{Trajectory extraction}
\label{sec:extract}

This section presents a method for extracting approximate optimal trajectories from moment data. 

As presented so far, the method provides a converging sequence of lower bounds to the value of the control problem. This already provides a valuable tool to the user, allowing to certify as \emph{globally} optimal local solutions found by other means such as a local solver or heuristics -- or even reject unsound candidate solutions, see \S\ref{sec:LQRex}. In addition, comparing lower bounds with the value of an admissible solution, when they do not match, allows to easily quantify the maximal gain that may be achieved by investing more numerical efforts for that particular problem.

As useful as these may prove, it seems intuitive that much more than the cost could be extracted from moment data, especially at sufficiently high relaxation orders. In this section, we show that optimal controls and trajectories can indeed be inferred. The focus here is not on obtaining very precise estimates of the optimal solution, as this would require an impractically large number of test functions in the weak ODE constraints. In addition, estimates of arbitrary high precision can already be obtained routinely from local optimal control solvers (see, e.g., \texttt{BOCOP} \cite{bocop}), \emph{if suitably initialized}. Therefore, what is only needed from the moment data is a sensible approximation of the globally optimal solution(s), to be then simply refined by local methods.

Consider the trajectory $x(t)$ which must be reconstructed on $[0,T]$ from partial moment sequence $\left\lbrace L_{y^j}(t^\alpha x^\beta) \right\rbrace_{\alpha+ \lvert \beta \rvert \leq 2r}$, $j=1 \ldots m$. As a consequence of \eqref{th:noGapVinter}, see \cite[Cor.~1.4]{Vinter1993ConvexDuality}, optimal solutions $\lbrace \mu_j \rbrace$ are supported on the graph of optimal trajectories. Therefore, one possible strategy for reconstructing $x(t)$ is to construct an approximate support on a discretization of $[0,T] \times X$ from the partial moment data. We propose a specialization for switched systems - needed for recovering the modal duty cycles -- of the method developed in \cite{claeys2014reconstruction}. As in this reference, we assume that the optimal pair $(u^*(t),x^*(t))$ is unique for ease of exposition. We also assume for the same reason that the state space is unidimensional, as the outlined procedure could be trivially repeated for each component of a multidimensional state space.

Fix a mesh resolution $\varepsilon$. Denote then by $X_\varepsilon$ a mesh supported on finitely many points, such that for all $x \in X$, there exists $x_i \in X_\varepsilon$ such that $\lvert x_i - x \rvert \leq \varepsilon$.
Let $T_\varepsilon$ denote the analogous discretization of $[0,T]$. It is proposed in \cite[Th.~3]{claeys2014reconstruction} to solve the program
\begin{equation}\label{eq:approxLP}
\begin{aligned}
\min_{\tilde{\mu}_j}  \; & \lVert L_{y^j}(t^\alpha x^\beta) - \langle t^\alpha x^\beta ,\tilde{\mu}_j \rangle \rVert \\
\text{s.t.} \; & \tilde{\mu}_j \in \PositiveMeasures ( T_\varepsilon \times X_\varepsilon ),
\end{aligned}
\end{equation}
to obtain the best discrete approximation $\tilde{\mu}_j$ of the partial moment sequence $\lbrace y^j \rbrace$. Indeed, this can be recast as a simple finite-dimensional, linear program, since measures in $\PositiveMeasures ( T_\varepsilon \times X_\varepsilon )$ can be parametrized by a discrete set of weights, each assigned to an atom supported on $T_\varepsilon \times X_\varepsilon$. The approximate trajectory way points are simply deduced from the support of the optimal solution of \eqref{eq:approxLP}, viz., from the non-zero atomic weights.

This simple procedure could be repeated for each modal measure $\mu_j$ in order to reconstruct the full trajectory. Active modes along the trajectory are then deduced by inspecting the relative modal weights at each atom. However, this naive approach proves to be very imprecise in practice, since modes are deduced from solutions of independent optimization problems (one for each measure). In particular, the sum of all modal measures must disintegrate as (as plugging test functions of time only in \eqref{eq:weak} reveals):
\begin{equation}\label{eq:disintegration}
 \sum_{j=1}^m \mu_j = \zeta(\diff x | t) \, \diff t,
\end{equation}
where $\zeta$ is a probability measure on $X$ define almost-everywhere on $[0,T]$. However, the discrete version of this property is not guaranteed with independent solving of \eqref{eq:approxLP}, so that duty cycles can hardly be reconstructed. This observation leads directly to the following alternative program:
\begin{equation}\label{eq:approxLPbis}
\begin{aligned}
\min_{\tilde{\mu}_1, \ldots, \tilde{\mu}_m }  \; & \sum_{j=1}^m L_{y^j}(t^\alpha x^\beta) - \langle t^\alpha x^\beta ,\tilde{\mu}_j \rangle \\
\text{s.t.} \; & \sum_{j=1}^m \sum_{k=1}^{\lvert X_\epsilon \rvert} \tilde{\mu}_j \lbrace (t_i,  x_k ) \rbrace = (\Delta t)_i, \quad i=1,\ldots,\lvert T_\epsilon \rvert,\\
& \tilde{\mu}_j \in \PositiveMeasures ( T_\varepsilon \times X_\varepsilon ),
\end{aligned}
\end{equation}
where $(\Delta t)_i$ is the length of the $i$th cell, containing $t_i \in T_\epsilon$, of a given partition of $[0,T]$. In \eqref{eq:approxLPbis}, we have done nothing else that to couple the modal reconstruction by constraining the discretization of $\zeta(\diff x | t_i)$ to be a probability measure. When the solution of the optimal control problem is unique, it is then possible to approximate the value of $x^*(t)$ at the time nodes via
\[
x^*(t_i) \approx \frac{\sum_{j=1}^m \sum_{k=1}^{\lvert X_\epsilon \rvert} \tilde{\mu}_j \lbrace( t_i, x_k ) \rbrace \, x_k}{(\Delta t)_i},
\]
and the duty cycles via
\[
d_j^*(t_k) \approx \frac{\sum_{k=1}^{\lvert X_\epsilon \rvert} \tilde{\mu}_j \lbrace ( t_i, x_k ) \rbrace }{(\Delta t)_i}.
\]
To show that a relaxation has converged, we use those way-points to hot start a numerical method, see the examples in \S\ref{sec:examples}.
If the cost of the local method agrees with some tolerance to the relaxation cost, we then terminate the moment hierarchy and validate the local solution as globally optimal.
In practice, and in accordance with the analysis of \S\ref{sec:complexity}, problems with state-space smaller than 6 can be usually solved with a globality gap of a few percent at most.
The method may still be applied to larger problems; For some instances, one may still solve the problem, whereas on some others, the method simply provide a lower bound on the cost.

\section{Extensions}
\label{sec:extensions}

Several standard extensions of our results are now presented.

\subsection{Free/distributed initial state}

The approach can easily take into account a free initial state and/or time, by introducing an initial occupation measure
 $\mu_0 \in \PositiveMeasures(\{0\}\times X_0)$, with $X_0 \subset \R^n$ 
a given compact set, such that for an admissible starting point $(t_0,x(t_0))$,
\begin{equation}\label{eq:weakInitial}
\langle v, \mu_0 \rangle = v(t_0,x(t_0)),
\end{equation}
for all continuous test functions $v(t,x)$ of time and space. That is, in weak problem \eqref{eq:weakSwitchedAlternative}, one replaces some of the boundary conditions by injecting \eqref{eq:weakInitial} and making $\mu_0$ an additional decision variable. In dual \eqref{eq:dualExplicit}, this adds a constraint on the initial value and modifies the cost.

Similarly, a terminal occupation measure $\mu_T$ can be introduced for free terminal states and/or time. Note that injecting  both \eqref{eq:weakInitial} and its terminal counterpart in \eqref{eq:weakSwitchedAlternative} requires the introduction of an additional affine constraint to exclude trivial solutions, e.g., $\langle 1, \mu_0 \rangle = 1$ so that $\mu_0$ is a probability measure.
In dual problem \eqref{eq:dualExplicit}, this introduces an additional decision variable.

More interestingly, the initial time may be fixed w.l.g. to $t_0 = 0$, but only the spatial probabilistic distribution of initial states is known. Let $\xi_0(\diff x) \in \mathcal{P}(X_0)$ be the measure whose law describes such a distribution. Then, the additional constraints for \eqref{eq:weakSwitchedAlternative} are
\[
\langle v, \mu_0 \rangle = \int_{X_0} \!\! v(0,x) \, \xi_0(\diff x).
\]
As remarked in \cite{sicon}, this changes the interpretation of LP \eqref{eq:weakSwitchedAlternative} to the minimization of the expected value of the cost given the initial distribution. See also \cite{roa} for the Liouville interpretation of the LP as transporting measure $\xi_0$ along the optimal flow.

\subsection{Additional constraints}
\label{sec:extraconstraints}

In measure LP \eqref{eq:weakSwitchedAlternative}, each modal occupation measure is supported on the same set $K$. This simply translates the fact that state constraints are mode-independent. However, nothing prevents the use of specific mode constraints by defining $m$ sets $\lbrace K_j \rbrace_{j=1,\ldots,m}$. In unstructured LP \eqref{eq:weak}, these would be specified by constraining the support of $\mu$ to $\prod_{j=1}^m \lbrace e_j \rbrace \times K_j $, where $e_j$ is the unit vector in $\R^m$ whose $j$-th entry is $1$. Then, in the proof of Th.~\ref{th:noGapModal}, for direction $p^* \leq p_{W}^*$, it is easy to see that this implies that each $\mu_j$ is supported on $K_j$. For the reverse direction, one can define w.l.g. the support of $\bar{\mu}$ to be $K:=\cup_{j=1}^m K_j$ by extending that of each $\mu_j$ to $K$ and setting $\mu_j(K \setminus K_j)=0$. Then $\mu$ has the expected support.

A second class of constraints that can be easily accounted for by the method are a finite number of integral constraints of the form
\begin{equation}\label{eq:integralConstr}
\int_0^T \!\! h_{\sigma(t)}^k(t,x(t)) \, \diff t \leq e_k, \qquad k=1,\ldots,n_i
\end{equation}
with given $h_j^k \in \R[t,x]$, $j=1,\ldots,m$ and $e_k \in \R$, $k=1,\ldots,n_i$. Indeed, rewriting the constraint in the equivalent formalism of \eqref{discrete} and using modal occupation measures as in Def.~\ref{th:modalOccupMeas}, \eqref{eq:integralConstr} becomes
\begin{equation}\label{eq:integralConstrModal}
\sum_{j=1}^{m} \langle h^k_j, \mu_j \rangle \leq e_k.
\end{equation}
That is, in moment problem \eqref{eq:momentrelax}, whose first relaxation order may need to be updated in function of the degrees of each $h_j^k$, each of the $n_i$ integral constraints simply appears as an additional linear constraint. The interest of treating these integral constraints explicitly (instead of rewriting them as additional states to fit in the rigid formalism of \eqref{ocp}) is obvious from \S\ref{sec:complexity} in terms of computational load. Conversely,  some translational-invariant problems might strongly benefit in replacing selected states by integral constraints, see, e.g., \cite{Sager2014electricCar}.

\subsection{Wider class of dynamical systems}\label{sec:wider}

First note that additional drift terms, viz., dynamics of the form $\dot{x} = f_0 + \sum_{j=1}^m f_j \, u_j$, fit easily within the framework of problem \eqref{ocp} by adding the drift term to each mode dynamics.

Secondly, in a related fashion, affine-in-the-control problems with polytopic control sets
 more general that the box described in (\ref{eq:simplex}) 
can also be handled by the formalism presented in this paper. Indeed, the problem is equivalent to controlling (with relaxed objects -- see section \S\ref{sec:measureLP}) the system with 
modes associated with vertices of the control polytope. 

Finally, dynamics of the form $f_j(t,x,u_j)$, parametrized by a mode-dependent measurable control $u_j(t)$, can be incorporated into the presented formalism by combining the approach in this paper with that of \cite{sicon}. However, mode-dependent states, where the size of the state-space varies for each given mode change and/or whose meaning is mode-dependent, must be incorporated by extending those states to all modes with a null vector field. It is an open question whether a more elegant approach can be incorporated into the framework of this paper.

\subsection{Open-loop versus closed-loop}
In problem (\ref{ocp}), the control signal is the switching sequence $\sigma(t)$ which is
a function of time: this is an open-loop control, similarly to what was proposed in \cite{impulse}
for impulsive control design. One could also want to
constrain the switching sequence to be an explicit or implicit function of the state,
i.e., $\sigma(x(t))$, a closed-loop control signal. In this case, each occupation measure, explicitly depending on time, state and control, may be disintegrated as
$\diff\mu(t,x,u)=\xi(\diff t\:|\:t,u)\omega(\diff u\:|\:t)\diff t$, and we should follow the framework
described originally in \cite{sicon}. 

\subsection{Switching and impulsive control}
We may also combine switching control and impulsive control
if we extend the system dynamics to
\[
\diff x(t) = \sum_{k=1}^m f_k(x(t)) \, u(t) \, \diff t + \sum_{j=1}^{p} g_j(t) \nu_j(\diff t),
\]
which must be understood in a weak sense. Here, $g_j$ are given continuous vector functions of time
and $\nu_j$ are signed measures to be found, jointly with
the switching signal $u(t)$. Whereas modal occupation measures are
restricted to be absolutely continuous w.r.t.
the Lebesgue measure of time, impulsive control measures $\nu_j$ can
concentrate in time. For example, for a dynamical system
$\diff x(t) = g(t)\nu( \diff t)$, a Dirac measure $\nu( \diff t)=\delta_{s}$ 
enforces at time $t=s$ a state jump $x^+(s) = x^-(s) + g(s)$.
In this case, to avoid trivial solutions, the objective function
should penalize the total variation of the impulsive control measures,
see \cite{impulse}.

\section{Examples}\label{sec:examples}

In this section, we show several examples to illustrate the method.
The first example illustrates the fact that
the infimum is not attained even in the simplest instances of
optimal control problem (\ref{ocp}).
The second example is the constrained double integrator example of \cite{sicon}. 
The final two examples are taken from \cite{vasudevan2014consistent}, where they are solved by a local optimization method dedicated for switched systems. These last examples highlight how our method can complement more traditional approaches.

All moment relaxations were built with the \texttt{GloptiPoly} toolbox \cite{GloptiPoly} and solved via the \texttt{Mosek} semi-definite solver \cite{mosek}, interfaced through \texttt{Yalmip} \cite{yalmip}.

\subsection{Chattering}\label{sec:exScalar}

Consider the scalar $(n=1)$ optimal control problem (\ref{ocp}):
\[
\begin{array}{rcll}
p^* & = & \inf & \displaystyle\int_0^1 x^2(t) \diff t \\
&& \mathrm{s.t.} & \dot{x}(t) = a_{\sigma(t)}x(t),\\
&&& x(0) = \frac{1}{2},\\
&&& x(t) \in [-1,\:1], \quad\forall t \in [0,1]
\end{array}
\]
where the infimum is w.r.t. a switching sequence
$\sigma : [0,1] \mapsto \{1,\:2\}$ and
\[
a_1 := -1, \quad a_2 := 1.
\]
In Table \ref{ex1table} we report the lower bounds $p^*_d$ on
the optimal value $p^*$ obtained by solving LMI relaxations (\ref{eq:momentrelax}) of
increasing orders $d$, rounded to 5 significant digits.
We also indicate the number of variables $\bar{n}$
(i.e., total number of moments) of each LMI problem, as well as
the zeroth order moment of each occupation measure
(recall that these are approximations of the time spent
on each mode). We observe that the values of the
lower bounds and the masses stabilize quickly.

\begin{table}[h!]
\centering
\begin{tabular}{c|c|c|cc}
$d$ & $p^*_d$ & $\bar{n}$ & $y_{1,0}$ & $y_{2,0}$ \\ \hline
1 & $-5.9672\cdot10^{-9}$ & 18 & 0.74056 & 0.25944 \\
2 & $4.1001\cdot10^{-2}$ & 45 & 0.75170 & 0.24830 \\
3 & $4.1649\cdot10^{-2}$ & 84 & 0.74632 & 0.25368 \\
4 & $4.1666\cdot10^{-2}$ & 135 & 0.74918 & 0.25082 \\
5 & $4.1667\cdot10^{-2}$ & 198 & 0.74974 & 0.25026 \\
6 & $4.1667\cdot10^{-2}$ & 273 & 0.74990 & 0.25010 \\
7 & $4.1667\cdot10^{-2}$ & 360 & 0.74996 & 0.25004 \\
\end{tabular}
\caption{Lower bounds $p^*_d$ on the optimal value $p^*$ obtained
by solving LMI relaxations of increasing orders $d$;
$\bar{n}$ is the number of variables in the LMI problem;
$y_{j,0}$ is the approximate time spent on each mode $j=1,2$.\label{ex1table}}
\end{table}

In this simple case, it is easy to obtain analytically the
optimal switching sequence: it consists of driving the state
from $x(0)=\frac{1}{2}$ to $x(\frac{1}{2})=0$ with the first
mode, i.e., $u_1(t)=1,u_2(t)=0$ for $t \in [0,\frac{1}{2}[$, and
then chattering between the first and second mode with
equal proportion so as to keep $x(t)=0$, i.e., $u_1(t)=\frac{1}{2},u_2(t)=\frac{1}{2}$
for $t \in ]\frac{1}{2},1]$. It follows that the infimum
is equal to
\[
p^* = \int_0^{1/2} \left(\frac{1}{2}-t\right)^2 \diff t = \frac{1}{24} \approx 4.1667\cdot10^{-2}.
\]
Because of chattering, the infimum in problem (\ref{ocp})
is not attained by an admissible
switching sequence. It is however
attained in the convexified problem (\ref{convex}).

The optimal moments can be obtained analytically
\[
y_{1,{\alpha}} = \int_0^{\frac{1}{2}} t^{\alpha} \diff t + \frac{1}{2} \int_{\frac{1}{2}}^1
t^{\alpha} \diff t = \frac{2+2^{-\alpha}}{4+4\alpha},
\]
\[
y_{2,{\alpha}} = \frac{1}{2} \int_{\frac{1}{2}}^1 t^{\alpha} \diff t
= \frac{2-2^{-\alpha}}{4+4\alpha}
\]
and they can be compared with the following moment vectors
obtained numerically at the 7th LMI relaxation:
\[
\begin{array}{rcl}
\mathrm{computed}\: y_1 & = & \left(0.74996 \:\: 0.31246 \:\: 0.18746 \:\: 0.13277 \cdots \right), \\
\mathrm{exact}\:y_1 & = & \left(0.75000 \:\: 0.31250 \:\: 0.18750 \:\: 0.13281 \cdots \right), \\
\mathrm{computed}\:y_2 & = & \left(0.25004 \:\: 0.18754 \:\: 0.14588 \:\: 0.11723 \cdots \right), \\
\mathrm{exact}\:y_2 & = & \left(0.25000 \:\: 0.18750 \:\: 0.14583 \:\: 0.11719 \cdots \right).
\end{array}
\]
We observe that the computed moments  closely
match the exact optimal moments, so that the approximate
control law extracted from the computed moments will be
almost optimal.

\subsection{Double integrator}\label{sec:doubleint}

We revisit the double integrator example with state constraint
studied in \cite{sicon}, formulated as the following
optimal switching problem:
\[
\begin{array}{rcll}
p^* & = & \inf & T \\
&& \mathrm{s.t.} & \dot{x}(t) = f_{\sigma(t)}(x(t)),\\
&&& x(0) = (1,\:1), \quad x(T) = (0,\:0)\\
&&& x_2(t) \geq -1, \quad\forall t \in [0,T]
\end{array}
\]
where the infimum is w.r.t. a switching sequence
$\sigma : [0,T] \mapsto \{1,\:2\}$ with free terminal time $T\geq 0$
and affine dynamics
\[
f_1 := \left(\begin{array}{c}x_2\\-1\end{array}\right), \quad
f_2 := \left(\begin{array}{c}x_2\\1\end{array}\right).
\]
We know from \cite{sicon} that the optimal sequence consists of starting with
mode $1$, i.e., $u_1(t)=1$, $u_2(t)=0$ for $t \in [0,2]$,
then chattering with equal proportion between mode $1$ and $2$,
i.e., $u_1(t)=u_2(t)=\frac{1}{2}$ for $t \in [2,\frac{5}{2}]$
and then eventually driving the state to the origin
with mode $2$, i.e., $u_1(t)=0$, $u_2(t)=1$ for $t \in [\frac{5}{2},\frac{7}{2}]$.
Here too the infimum $p^*=\frac{7}{2}$ is not attained for
problem (\ref{ocp}), whereas it is attained with the
above controls for problem (\ref{convex}).

In Table \ref{ex2table} we report the lower bounds $p^*_d$ on
the optimal value $p^*$ obtained by solving LMI relaxations (\ref{eq:momentrelax}) of
increasing orders $d$, rounded to 5 significant digits.
We also indicate the number of variables $\bar{n}$
(i.e., total number of moments) of each LMI problem, as well as
the zeroth order moment of each occupation measure
(recall that these are approximations of the time spent
on each mode). We observe that the values of the
lower bounds and the masses stabilize quickly
to the optimal values $p^*=\frac{7}{2}$, $y_{1,0}=\frac{5}{2}$,
$y_{2,0}=\frac{5}{4}$.

\begin{table}[h!]
\centering
\begin{tabular}{c|c|c|cc}
$d$ & $p^*_d$ & $\bar{n}$ & $y_{1,0}$ & $y_{2,0}$ \\ \hline
1 & $2.5000$ & 30 & 1.7500 & 0.75000 \\
2 & $3.2015$ & 105 & 2.1008 & 1.1008 \\
3 & $3.4876$ & 252 & 2.2438 & 1.2438 \\
4 & $3.4967$ & 495 & 2.2484 & 1.2484 \\
5 & $3.4988$ & 858 & 2.2494 & 1.2494 \\
6 & $3.4993$ & 1365 & 2.2496 & 1.2497 \\
7 & $3.4996$ & 2040 & 2.2498 & 1.2498 \\
\end{tabular}
\caption{Lower bounds $p^*_d$ on the optimal value $p^*$ obtained
by solving LMI relaxations of increasing orders $d$;
$\bar{n}$ is the number of variables in the LMI problem;
$y_{j,0}$ is the approximate time spent on each mode $j=1,2$.\label{ex2table}}
\end{table}

{We carried out a comparison of computational times required for solving the unstructured generic
LMI relaxations of \cite{sicon} versus their structured modal counterpart, for relaxation orders $1$ to $7$.
The computational gains induced by using modal occupation measures follow the expected exponential
improvement, with an exponent of about $1.7$, better than the $\frac{4}{3}$ predicted by the asymptotic analysis of \S\ref{sec:complexity}.}


\subsection{Switched LQR}\label{sec:LQRex}

We consider the problem
\[
\begin{aligned}
p^* & = & \inf & \int_0^T \! 0.01 \, u^2(t) \, \diff t + \sum_{i=1}^3 (x_i(T)-1)^2 \\
&& \mathrm{s.t.}\; & \dot{x}(t) = A \, x(t) + B_{\sigma(t)} \, u(t),\\
&&& x(0) = (0,0,0),\\
&&& u(t) \in U = [-20,20],\\
&&& \sigma \in \lbrace 1, 2, 3 \rbrace,\\
&&& T = 2,
\end{aligned}
\]
with problem data
\[
\begin{array}{c}
A = \left(\begin{smallmatrix} 1.0979 & -0.0105 & 0.0167 \\ -0.0105 & 1.0481 & 0.0825 \\ 0.0167 & 0.0825 & 1.1540\end{smallmatrix}  \right), \\[1em]
B_1 = \left(\begin{smallmatrix} 0.9801 \\ -0.1987 \\ 0 \end{smallmatrix} \right) , \;
B_2 = \left(\begin{smallmatrix} 0.1743 \\ 0.8601 \\ -0.4794 \end{smallmatrix} \right) , \;
B_3 = \left( \begin{smallmatrix} 0.0952 \\ 0.4699 \\ 0.8776 \end{smallmatrix} \right)
\end{array}
\]
{and where the infimum is w.r.t. a switching sequence $\sigma(t)$ but 
also a classical control $u(t)$, consistently with the extension sketched in \S \ref{sec:wider}.}
The lower bounds and computation times are given in Tab.~\ref{tab:LQRex}. We also report the cost of a direct, local, optimal control methods as implemented in \texttt{BOCOP} \cite{bocop} via embedding \eqref{convex}, and hot started with the method of \S\ref{sec:extract}. As one can see, the costs are in close agreement, and we can numerically certify the local solutions as globally optimal. We also used \texttt{VSDP} \cite{vsdp} to strengthen the semi-definite bounds, so as to compute verified lower bounds on the cost of a relaxation via rigorous interval arithmetic. The lower bound for the third order relaxation could be certified to the significant figures presented in Tab.~\ref{tab:LQRex}. The disagreement with the lower cost reported in \cite{vasudevan2014consistent} is due to numerical round-off issues in the execution of their local method. In fact, integrating finely the solution reported in \cite{vasudevan2014consistent} leads the a candidate arc whose cost is \emph{above} our certified lower bound.

\begin{table}
\centering
\begin{tabular}{c|c|c}
$d$ & $p^*_d$ & $\bar{n}$  \\ \hline
1 & $0.071 \, 10^{-3}$ & 93 \\
2 & $1.823 \, 10^{-3}$ & 518 \\
3 & $1.829 \, 10^{-3}$ & 1806 \\
\ldots & & \\
\texttt{BOCOP} & $1.831 \, 10^{-3}$ &
\end{tabular}
\caption{Lower bounds $p^*_d$ and number of variables $\bar{n}$ of the LMI problem in function of relaxation order $d$ for the problem of \S\ref{sec:LQRex}. The value found by hot starting \texttt{BOCOP} with our method is also shown.
}
\label{tab:LQRex}
\end{table}

\subsection{Double tank problem}\label{sec:tankex}

We consider the double tank problem of \cite{vasudevan2014consistent}:
\[
\begin{aligned}
p^* & = & \inf & \int_0^T \! 2 (x_2(t)-3) \, \diff t \\
&& \mathrm{s.t.}\; & \dot{x}_1(t) = c_{\sigma(t)} - \sqrt{x_1(t)} \\
&&& \dot{x}_2(t) = \sqrt{x_1(t)} - \sqrt{x_2(t)},\\
&&& x(0) = (2,\:2)' \\
&&& \sigma \in \lbrace 1, 2 \rbrace, \\
&&& T = 10,
\end{aligned}
\]
with data $c_1 = 1, c_2 = 2$ { and where the infimum is w.r.t. a switching sequence $\sigma(t)$}.
By introducing the lifts $\ell_i = \sqrt{x_i}$, algebraically constrained as $\ell_i^2 = x_i$, $\ell_i \geq 0$, the problem is easily recast with polynomial data. Table \ref{tab:tankex} reports the relaxation costs of our method along with the cost of \texttt{BOCOP}, hot-started by the moment data. Again, agreement is reached quickly. Note that the cost is here strictly better than the one reported in \cite{vasudevan2014consistent}, which is stuck in a local but not global minimum. This adverse behavior is of course inherent to any local optimization scheme, and shows the interest of methods which allow to obtain decent starting point to initialize them. Obviously, such a pass with a local method is still necessary to obtain very precise numeric solutions, and our general framework allows to combine the best of the global and local worlds.

\begin{table}
\centering
\begin{tabular}{c|c|c}
$d$ & $p^*_d$ & $\bar{n}$  \\ \hline
1 & $0.0000$ & 62 \\
2 & $4.4886$ & 322 \\
3 & $4.7265$ & 1092 \\
4 & $4.7298$ & 2904 \\
5 & $4.7304$ & 6578 \\
\ldots & & \\
\texttt{BOCOP} & $4.7304$ &
\end{tabular}
\caption{Lower bounds $p^*_d$ and number of variables $\bar{n}$ of the LMI problem in function of relaxation order $d$ for the problem of \S\ref{sec:tankex}. The value found by hot starting \texttt{BOCOP} with our method is also shown.
}
\label{tab:tankex}
\end{table}

\subsection{Quadrotor}
\label{sec:quadrotorex}

For our last example, we take the quadrotor example considered in \cite{vasudevan2014consistent}:
\begin{equation}
\begin{aligned}
p^* & = & \inf  & \int_0^{T} \! 5 \, u^2(t) \, \diff t + \left\lVert \begin{smallmatrix} \sqrt{5} (x_1(T)-6) \\ \sqrt{5} (x_2(T)-1) \\ \sin(\frac{x_3(T)}{2}) \end{smallmatrix} \right\rVert_2^2 \\,
&& \mathrm{s.t.}\; & \ddot{x} = f_{\sigma(t)}, \\
&&& x(0) = (0,1,0), \dot{x}(0) = (0,0,0), \\
&&& u(t) \in U = [0,10^{-3}], \\
&&& \sigma(t) \in \lbrace 1, 2, 3 \rbrace, \\
&&& T=7.5,
\end{aligned}
\end{equation}
with data $M=1.3$, $L=0.3050$, $I=0.0605$, $g=9.8$, and
\begin{gather}
f_1 = 
\left(\begin{smallmatrix} \frac{\sin x_3(t)}{M} (u(t)+Mg) \\ \frac{\cos x_3(t)}{M} (u(t)+Mg)-g \\ 0 \end{smallmatrix} \right), \\
f_2 = \left(\begin{smallmatrix} g \sin x_3(t) \\ g \cos x_3(t)-g \\ \frac{-L u(t)}{I} \end{smallmatrix} \right), \quad
f_3 = \left(\begin{smallmatrix} g \sin x_3(t) \\ g \cos x_3(t)-g \\ \frac{L u(t)}{I} \end{smallmatrix} \right)
\end{gather}
{and where the infimum is w.r.t. a switching sequence $\sigma(t)$ but 
also a classical control $u(t)$, consistently with the extension sketched in \S \ref{sec:wider}.}
We turn this problem into one with polynomial data by considering another parametrization of the attitude $x_3$ belonging to $SO(2)$, namely  by replacing $x_3$ with $\ell_1:=\cos x_3$ and $\ell_2:=\sin x_3$, who must then satisfy the ODE
\begin{equation}
\begin{aligned}
\dot{\ell}_1 & = - \ell_2 \dot{x}_3,\\
\dot{\ell}_2 & =  \ell_1 \dot{x}_3.
\end{aligned}
\end{equation}
The problem has however too high of a dimension to hope to obtain  tight bounds systematically, with $7$ states/lifts, $1$ control and time (see the analysis in \S\ref{sec:complexity}). Nonetheless, for this specific example, the bounds presented in Tab.~\ref{tab:quadrotorex} show convergence in two relaxations only,
{since the local minimum obtained with \texttt{BOCOP} is very close to the
bound obtained at the relaxation of order $d=3$, solved in about 7 minutes on our desktop PC.}
Note that the relaxation of order {$d=4$, involving
$\bar{n}=98670$ moments,} would still be attainable with modern machines, at the expense of a full day of computations. 
Finally, also note that the solution is again better than the local minimum reported in \cite{vasudevan2014consistent}.

\begin{table}
\centering
\begin{tabular}{c|c|c}
$d$ & $p^*_d$ & $\bar{n}$  \\ \hline
2 & $9.0255 \, 10^{-3}$ & 3135 \\
3 & $9.4229 \, 10^{-2}$ & 21021 \\
\ldots & & \\
\texttt{BOCOP} & $9.5754 \, 10^{-2}$ &
\end{tabular}
\caption{Lower bounds $p^*_d$ and number of variables $\bar{n}$ of the LMI problem in function of relaxation order $d$ for the problem of \S\ref{sec:quadrotorex}. The value found by hot starting \texttt{BOCOP} with our method is also shown.
}
\label{tab:quadrotorex}
\end{table}

\section*{Acknowledgments:} This work benefited from discussions with Milan Korda,
Jean-Bernard Lasserre and Luca Zaccarian.


\begin{thebibliography}{XX}

\bibitem{Anderson}
E.J. Anderson and P. Nash. Linear programming in infinite-dimensional spaces: theory and applications. Wiley, NY, 1987.

\bibitem{Axelsson}
H. Axelsson, Y. Y. Wardi, M. Egerstedt, E. I. Verriest.
Gradient descent approach to optimal mode scheduling in hybrid dynamical systems.
J. Optim. Theory Appl., 136:167--186, 2008.

\bibitem{Barvinok2002convexity}
A. Barvinok. A Course in Convexity. American Mathematical Society, Providence, NJ, 2002.

\bibitem{Bengea}
S. C. Bengea, R. A. DeCarlo.
Optimal control of switching systems.
Automatica, 41:11-27, 2005.

\bibitem{BenTal2001lecture}
A. Ben-Tal, A. Nemirovski. Lectures on modern convex optimization: analysis, algorithms, and engineering applications. SIAM,
Philadelphia, PA, 2001.

\bibitem{bocop}
F. J. Bonnans, P. Martinon, V. Gr{\'e}lard.
Bocop - A collection of examples.
INRIA technical report RR-8053, 2012.

\bibitem{Bonnard2014NMR}
B. Bonnard, M. Claeys, O. Cots, P. Martinon. Geometric and numerical methods in the contrast imaging problem in nuclear magnetic resonance. To appear in Acta Applicandae Mathematicae, 2014.

\bibitem{Branicky}
M. S. Branicky, V. S. Borkar, S. K. Mitter. A unified framework for hybrid control: model and optimal control theory. IEEE Trans. Autom. Control, 43:31--45, 1998.

\bibitem{bressan}
A. Bressan, B. Piccoli. Introduction to the mathematical theory of control.
Amer. Inst. Math. Sci., Springfield, MO, 2007.

\bibitem{candes}
E. J. Cand\`es, C. Fern\'andez-Granda.
Towards a mathematical theory of super-resolution.
To appear in Communications on Pure and Applied Mathematics, 2014.

\bibitem{Cassandras}
C. Cassandras, D. L. Pepyne, Y. Wardi. Optimal control of a class of hybrid systems. IEEE Trans.
Autom. Control, 46:398--415, 2001.

\bibitem{Clarke2013Functional}
F. Clarke. Functional analysis, calculus of variations and optimal control. Springer, London, 2013.

\bibitem{impulse}
M. Claeys, D. Arzelier, D. Henrion, J. B. Lasserre. Measures and LMIs for Impulsive Nonlinear
Optimal Control.  IEEE Trans. Autom. Control, 59:1374-1379, 2014.

\bibitem{ltvimpulse}
M. Claeys, D. Arzelier, D. Henrion, J. B. Lasserre. Moment LMI approach to LTV impulsive control. Proc. IEEE Conf. Decision and Control,
Florence, Italy, 2013.

\bibitem{claeys2014reconstruction}
M. Claeys, R. Sepulchre.
Reconstructing trajectories from the moments of occupation measures.
Proc. IEEE Conf. Decision and Control, Los Angeles, California, 2014.

\bibitem{Deaecto}
G. S. Deaecto, J. C. Geromel, J. Daafouz. Dynamic output feedback H$_\infty$ control of switched linear systems. Automatica, 47:1713--1720, 2011.

\bibitem{DeCarlo}
R. A. DeCarlo, M. S. Branicky, S. Pettersson, B. Lennartson. Perspectives and results on the stability and stabilizability of hybrid systems. Proc. of the IEEE, 88:1069-1082, 2000.

\bibitem{Fattorini}
H. O. Fattorini. Infinite dimensional optimization and control theory.
Cambridge Univ. Press, Cambridge, UK, 1999.

\bibitem{Frankowska}
H. Frankowska, F. Rampazzo. Filippov's and Filippov-Wazewski's Theorems on Closed Domains. Journal of Differential Equations, 161(2), 449-478, 2000.

\bibitem{Geromel}
J. C. Geromel, P. Colaneri, P. Bolzern. Dynamic output feedback control of switched
linear systems. IEEE Trans. Autom. Control, 53:720-733, 2008.

\bibitem{Gero}
J.C. Geromel, G. Deaecto, J. Daafouz. Suboptimal switching control consistency analysis for
switched linear systems. IEEE Trans. Autom. Control, 58(7):1857-1861, 2013.

\bibitem{Hed99}
S. Hedlund, A. Rantzer. Optimal control of hybrid systems. Proc. IEEE Conf. Decision and Control,
Pheonix, Arizona, 1999.

\bibitem{Henrion2013switch}
D. Henrion, J. Daafouz, M. Claeys. Optimal switching control design for polynomial systems: an LMI approach. Proc. IEEE Conf. Decision and Control,
Florence, Italy, 2013.

\bibitem{roa}
D. Henrion, M. Korda. Convex computation of the region of attraction of polynomial control systems.
IEEE Trans. Autom. Control, 59(2):297-312, 2014.

\bibitem{mevissen}
D. Henrion, J. B. Lasserre, M. Mevissen. Mean squared error minimization for inverse moment problems.
Applied Mathematics and Optimization,70(1):83-110, 2014.

\bibitem{GloptiPoly}
D. Henrion,  J. B. Lasserre, J. L\"ofberg.
Gloptipoly 3: Moments, optimization and semidefinite programming.
Optim. Methods and Software, 24(4-5):761--779, 2009.

\bibitem{vsdp}
C. Jansson. 
VSDP: a Matlab software package for verified semidefinite programming.
Nonlinear Theory and its Applications, 327--330, 2006.

\bibitem{sicon}
J. B. Lasserre, D. Henrion, C. Prieur, E. Tr\'elat. Nonlinear optimal control
via occupation measures and LMI relaxations. SIAM J. Control Opt.,
47:1643-1666, 2008.

\bibitem{lasserre}
J. B. Lasserre. Moments, positive polynomials and their applications.
Imperial College Press, London, UK, 2010.

\bibitem{Lasserre2006sparsity}
J. B. Lasserre. Convergent SDP-relaxations in polynomial optimization with sparsity. SIAM Journal on Optimization, 17(3), 822-843, 2006.

\bibitem{Liberzonb}
D. Liberzon, A. S. Morse. Basic problems in stability and design of switched systems.
IEEE Control Systems Mag. 19:59--70, 1999.

\bibitem{Liberzon}
D. Liberzon. Switching in systems and control. Birkhaeuser, Basel, 2003.

\bibitem{Lin}
H. Lin, P. J. Antsaklis. Stability and stabilizability of switched linear systems: A survey of recent results. IEEE Trans. Autom. Control, 54:308--322, 2009.

\bibitem{yalmip}
J. L\"{o}fberg.
Yalmip : A toolbox for modeling and optimization in Matlab.
Proc. IEEE Conf. CACSD, Taipei, Taiwan, 2004.

\bibitem{mosek}
MOSEK ApS.
The MOSEK optimization software.
2014.

\bibitem{Piccoli}
B. Piccoli. Hybrid systems and optimal control.
Proc. IEEE Conf. Decision and Control, Tampa, Florida, 1998.

\bibitem{Riedinger99}
P. Riedinger, C. Zanne, F. Kratz. Time optimal control of hybrid systems.
Proc. Amer. Control Conf., San Diego, California, 1999.

\bibitem{Riedinger}
P. Riedinger, C. Iung, F. Kratz. An optimal control approach for hybrid systems.
Europ. J. Control, 9:449--458, 2003.

\bibitem{Royden}
H.L. Royden and P. Fitzpatrick. Real analysis. Prentice Hall, NJ, 2010.

\bibitem{Rubio1986book}
J. E. Rubio. Control and Optimization: the linear treatment of nonlinear problems. Manchester University Press, UK, 1986.

\bibitem{Sager2014electricCar}
S. Sager, M. Claeys, F. Messine. Efficient upper and lower bounds for global mixed-integer optimal control.
To appear in J. Global Optim., 2014.

\bibitem{Seatzu}
C. Seatzu, D. Corona, A. Giua, A. Bemporad. Optimal control of continuous-time
switched affine systems. IEEE Trans. Autom. Control, 51:726--741, 2006. 

\bibitem{Shaikh2007}
M. S. Shaikh,  P. E. Caines. On the hybrid optimal control problem: theory and
algorithms. IEEE Trans. Autom. Control, 52:1587--1603, 2007. 

\bibitem{Shorten}
R. Shorten, F. Wirth, O. Mason, K. Wulff, C. King, Stability criteria for switched
and hybrid systems. SIAM Review, 49:545--592, 2007.

\bibitem{SeDuMi}
J. F.  Sturm. Using SeDuMi 1.02, a MATLAB toolbox for optimization over symmetric cones. Optimization methods and software, 11(1-4), 625-653, 1999.

\bibitem{Sun}
Z. Sun, S. S. Ge. Switched linear systems: control and design. Springer, London, 2006.

\bibitem{Sussmann}
H. J. Sussmann.
A maximum principle for hybrid optimal control problems.
Proc. IEEE Conf. Decision and Control, Phoenix, Arizona, 1999.

\bibitem{vasudevan2014consistent}
R. Vasudevan, H. Gonzalez, R. Bajcsy, S. Sastry.
Consistent approximations for the optimal control of constrained switched systems---part 2: An implementable algorithm.
SIAM Journal on Control and Optimization, 51:4484--4503, 2013.

\bibitem{Vinter1978equivalence}
R. B. Vinter, R. M. Lewis. The equivalence of strong and weak formulations for certain problems in optimal control. SIAM Journal on Control and Optimization, 16(4), 546-570, 1978.

\bibitem{Vinter1993ConvexDuality}
R. B. Vinter. Convex duality and nonlinear optimal control. SIAM journal on control and optimization, 31(2), 518-538, 1993.

\bibitem{Xu2}
X. Xu, P. J. Antsaklis. Results and perspectives on computational methods for
optimal control of switched systems. In: O. Maler, A. Pnueli, A. (eds.),
HSCC 2003, Lecture Notes Comput. Sci., 2623:540-555, Springer, Heidelberg, 2003.

\end{thebibliography}
\end{document}